\theoremstyle{plain}
\newtheorem{theorem}{Theorem}
\newtheorem{corollary}[theorem]{Corollary}
\newtheorem{lemma}[theorem]{Lemma}
\newtheorem{proposition}[theorem]{Proposition}
\theoremstyle{definition}
\newtheorem{definition}{Definition}
\theoremstyle{definition}
\newtheorem{example}[theorem]{Example}
\newtheorem*{remark}{Remark}
\newtheorem*{acknow}{Acknowledgements}
\DeclareRobustCommand{\stirling}{\genfrac\{\}{0pt}{}}
\begin{document}
	
	\title[Random Matrix-Valued Multiplicative Functions]{Random Matrix-Valued Multiplicative Functions and Linear Recurrences in Hilbert-Schmidt Norms of Random Matrices}
	\author{Maxim Gerspach}
	\thanks{The author was partially supported by DFG-SNF lead agency program grant 200020L\textunderscore175755}
	\address{ETH Z\"urich, Switzerland}
	\email{maxim.gerspach@math.ethz.ch}
	\maketitle
	
	\begin{abstract}
		We introduce the notion of a random matrix-valued multiplicative function, generalizing Rademacher random multiplicative functions to matrices. We provide an asymptotic for the second moment based on a linear recurrence property for Hilbert-Schmidt norms of sucessive products of random matrices. Moreover, we provide upper bounds for the higher even moments related to the generalized joint spectral radius.
	\end{abstract}
	
	\section{Introduction}
	
	A Rademacher random multiplicative function is a family $(f(n))_{n \in \mathbb{N}}$ (with the convention $0 \not \in \mathbb{N}$) of random variables taking values in $\{ \pm 1, 0 \}$ such that
	\begin{itemize}\setlength\itemsep{4pt}
		\item $n \mapsto f(n)$ is supported on squarefree integers,
		\item $(f(p))_{p \text{ prime}}$ are independent, each taking the values $ \pm 1$ with probability $\frac{1}{2}$ and 
		\item when $n = p_1 \cdots p_r$ is squarefree then we have $f(n) = f(p_1) \cdots f(p_r)$.
	\end{itemize}
	Moments of these functions have been studied in a great amount of detail. It is a classical fact that 
	\[ \mathbb{E}\bigg[ \Big( \sum_{n \le x} f(n) \Big)^2  \bigg] = \frac{6}{\pi^2} x + O(\sqrt{x}) \]
	and it was proven by Harper, Nikeghbali and Radziwi{\l}{\l} in \cite[Theorem 4]{HarperNR1} and independently by Heap and Lindqvist \cite[Theorem 4]{HeapLindqvist1} in the even case that for all integers $k\ge 3$ there exists a constant $C_k > 0$ such that
	\[ \mathbb{E}\bigg[ \Big( \sum_{n \le x} f(n) \Big)^k  \bigg] \sim C_k x^{k/2} (\log x)^{\binom{k}{2} - k}. \]
	
	In this work we will consider the following matrix-valued generalisation of Rademacher multiplicative functions.
	
	\begin{definition}
		Let $d \ge 1$ be an integer. A random matrix-valued multiplicative function is a family $(f(n))_{n \in \mathbb{N}}$ of random variables taking values in $\mathbb{C}^{d \times d}$ such that
		\begin{itemize}\setlength\itemsep{4pt}
			\item $n \mapsto f(n)$ is supported on squarefree integers,
			\item $(f(p))_{p \text{ prime}}$ are independent identically distributed (i.i.d.) and
			\item when $n = p_1 \cdots p_r$ is squarefree with $p_1 < \dots < p_r$ then we have $f(n) = f(p_1) \cdots f(p_r)$.
		\end{itemize} 
	\end{definition}
	
	\vspace{5pt}
	\noindent
	Our goal is to obtain estimates for the even moments
	\[ \mathbb{E} \Big[ \big \Vert \sum_{n \le x} f(n) \big \Vert_{HS}^{2 k} \Big], \]
	where $\Vert \cdot \Vert_{HS}$ denotes the Hilbert-Schmidt norm defined by $\Vert A \Vert_{HS}^2 = \mathrm{Tr}(A^* A)$ for $A \in \mathbb{C}^{d \times d}$. 

	In section \ref{SecMom} we will prove the following estimate for the second moment based on a linear recurrence property of the Hilbert-Schmidt norm, which will be the subject of section \ref{HSN}.
	
	\begin{theorem}\label{diag}
		Let $d \ge 1$ be an integer, let $X$ be a $\mathbb{C}^{d \times d}$-valued random variable and let $f$ be the associated matrix-valued multiplicative function. Suppose that $\mathbb{E} X = 0$ and
		\[\mathbb{E}\left[ \Vert X \Vert_{HS}^2 \mathbbm{1}(\Vert X \Vert_{HS}^2 > R) \right] \xrightarrow{R \to \infty} 0,\] 
		where $\mathbbm{1}(E)$ denotes the characteristic function of an event $E$. Define
		\begin{align*}
		T : \mathbb{C}^{d \times d} &\to \mathbb{C}^{d \times d},\\
		A &\mapsto \mathbb{E}[ X^* A X ],
		\end{align*}
		let $l:=d^2$ and assume that $T$ is diagonalizable with eigenvalues $\lambda_1, \dots, \lambda_l$ arranged in descending order according to their real parts. Then for any $N \in \mathbb{N}$ there are constants $C_{i,m}, \; m = 1, \dots, N, \, i = 1, \dots, l$ such that
		\[ \mathbb{E} \Big[ \big \Vert \sum_{n \le x} f(n) \big \Vert_{HS}^2 \Big] = x \sum_{m=1}^N \sum_{i=1}^l C_{i,m} (\log x)^{\lambda_i-m} + O \left( x (\log x)^{\lambda_1 - N - 1} \right) \]
		holds for all $x \ge 2$.
	\end{theorem}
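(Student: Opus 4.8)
The plan is to reduce the second moment to a weighted count of squarefree integers and then run a Selberg--Delange type analysis.

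First I would expand
\[
\mathbb{E}\Big[\big\Vert\sum_{n\le x}f(n)\big\Vert_{HS}^2\Big]=\sum_{m,n\le x}\mathbb{E}\big[\mathrm{Tr}(f(m)^*f(n))\big],
\]
noting that the hypothesis is in fact equivalent to $\mathbb{E}\Vert X\Vert_{HS}^2<\infty$ (dominated convergence in one direction, a crude truncation bound in the other), which both makes $T$ well defined and, via $\Vert f(n)\Vert_{HS}\le\prod_{p\mid n}\Vert f(p)\Vert_{HS}$, makes every term above finite. Only squarefree $m,n$ contribute. If $m\neq n$ are squarefree, choose a prime $q$ dividing exactly one of them, say $q\mid n$, $q\nmid m$; then $f(q)$ occurs exactly once in the noncommutative product defining $f(m)^*f(n)$, so that $f(m)^*f(n)=A\,f(q)\,B$ with $A,B$ measurable with respect to $\{f(p):p\neq q\}$, hence independent of $f(q)$. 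Cyclicity of the trace then gives $\mathbb{E}[\mathrm{Tr}(A f(q) B)]=\mathrm{Tr}\big(\mathbb{E}[f(q)]\,\mathbb{E}[BA]\big)=0$ since $\mathbb{E}[f(q)]=\mathbb{E}X=0$; the case $q\mid m$, $q\nmid n$ is identical using $\mathbb{E}[f(q)^*]=0$. Thus only the diagonal survives, and the second moment equals $\sum_{n\le x,\,\mu^2(n)=1}\mathbb{E}[\Vert f(n)\Vert_{HS}^2]$.

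Next I would feed in the linear recurrence of Section~\ref{HSN}. Writing the prime factors of a squarefree $n$ as $p_1<\cdots<p_r$, the matrices $M_0=I$, $M_j:=f(p_j)^*M_{j-1}f(p_j)$ satisfy $M_r=f(n)^*f(n)$, and since $f(p_j)$ is independent of $M_{j-1}$ one has $\mathbb{E}[M_j\mid f(p_1),\dots,f(p_{j-1})]=T(M_{j-1})$; iterating gives $\mathbb{E}[\Vert f(n)\Vert_{HS}^2]=\mathrm{Tr}\big(T^{\omega(n)}(I)\big)$, a quantity depending on $n$ only through $\omega(n)$. Using diagonalizability, fix an eigenbasis $E_1,\dots,E_l$ of $\mathbb{C}^{d\times d}$ with $T(E_i)=\lambda_i E_i$ and write $I=\sum_i\alpha_i E_i$; then $\mathrm{Tr}(T^r(I))=\sum_i c_i\lambda_i^r$ with $c_i:=\alpha_i\,\mathrm{Tr}(E_i)$ (equivalently, the sequence $(\mathrm{Tr}(T^r I))_r$ obeys a linear recurrence with characteristic roots among the $\lambda_i$), so
\[
\mathbb{E}\Big[\big\Vert\sum_{n\le x}f(n)\big\Vert_{HS}^2\Big]=\sum_{i=1}^l c_i\sum_{\substack{n\le x\\ n\text{ squarefree}}}\lambda_i^{\omega(n)}.
\]
It remains to estimate each inner sum. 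Its Dirichlet series factors as
\[
\sum_{n\text{ squarefree}}\frac{\lambda_i^{\omega(n)}}{n^s}=\prod_p\Big(1+\frac{\lambda_i}{p^s}\Big)=\zeta(s)^{\lambda_i}\,G_i(s),\qquad G_i(s):=\prod_p(1-p^{-s})^{\lambda_i}\Big(1+\frac{\lambda_i}{p^s}\Big),
\]
and each Euler factor of $G_i$ is $1+O(p^{-2s})$, so $G_i$ is holomorphic and bounded on $\{\Re s>\tfrac{1}{2}+\varepsilon\}$. This is precisely the setting of the classical Selberg--Delange method (as in Tenenbaum's book, Ch.~II.5), which yields, for every $N$,
\[
\sum_{\substack{n\le x\\ n\text{ squarefree}}}\lambda_i^{\omega(n)}=x\sum_{m=1}^N\widetilde C_{i,m}(\log x)^{\lambda_i-m}+O\big(x(\log x)^{\Re\lambda_i-N-1}\big),
\]
with $\widetilde C_{i,m}$ explicit in terms of $1/\Gamma$ and the Taylor coefficients of $G_i$ at $s=1$. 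Multiplying by $c_i$ and summing over $i$ gives the claimed identity with $C_{i,m}=c_i\widetilde C_{i,m}$; the error terms collapse to $O(x(\log x)^{\lambda_1-N-1})$ once we know $\lambda_1=\max_i\Re\lambda_i$ is real, which follows because $T$ maps the cone of positive semidefinite Hermitian matrices into itself ($A\succeq0\Rightarrow X^*AX\succeq0$ pointwise $\Rightarrow T(A)\succeq0$), so by the Perron--Frobenius/Krein--Rutman theorem its spectral radius $\rho(T)$ is an eigenvalue, and every eigenvalue $\mu$ satisfies $\Re\mu\le|\mu|\le\rho(T)$, whence $\lambda_1=\rho(T)\in\mathbb{R}_{\ge0}$ and $\Re\lambda_i\le\lambda_1$ for all $i$.

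The combinatorial and algebraic bookkeeping is routine; the one genuinely delicate step is the last analytic one, namely invoking the Selberg--Delange expansion with the stated uniform error for the finitely many (possibly complex) parameters $\lambda_i$. Particular care is needed when some $\lambda_i$ is a non-positive integer: there the main-term coefficients involving $1/\Gamma(\lambda_i-m)$ vanish and the corresponding contribution is simply absorbed into the error term, a point worth recording explicitly so the quoted asymptotic stays correct (if lossy) in those degenerate cases. A secondary care point is the vanishing of the off-diagonal terms in the first step, which genuinely relies on the noncommutative product structure of $f(m)^*f(n)$ together with the fact that a prime dividing exactly one of $m$ and $n$ contributes a single factor, independent of the rest, of mean zero.
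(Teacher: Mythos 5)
Your proof is correct and follows essentially the same route as the paper: reduce the second moment to $\sum_{n\le x}\mu^2(n)\,\mathrm{Tr}\big(T^{\omega(n)}(I)\big)$, use diagonalizability of $T$ to write this as $\sum_i c_i\sum_{n\le x}\mu^2(n)\lambda_i^{\omega(n)}$, and feed each inner sum into a Selberg--Delange expansion. Your more explicit treatment of the off-diagonal vanishing (isolating a single mean-zero factor and using trace cyclicity) and the Perron--Frobenius observation that $\lambda_1\in\mathbb{R}_{\ge0}$ are correct supplements to the paper's terser argument, which simply invokes Corollary~\ref{LinRec2} for the weight $a_{\omega(n)}$ and Dixon's theorem for the asymptotics of $\sum_{n\le x}\mu^2(n)z^{\omega(n)}$.
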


	Our argument also extends to the case when $T$ is not diagonalizable, even though our estimate becomes less precise in this case. The exact statement without the assumption of diagonalizability will be given and proven in section \ref{SecMom}.
	
	Section \ref{HighMom} will be devoted to proving an upper bound for higher moments that will be related to what is known as the generalized joint spectral radius.
	
	\begin{acknow}
		The author would like to thank Emmanuel Kowalski and Jori Merikoski for helpful discussions and comments on earlier drafts of this paper.
	\end{acknow}
	
	\section{A Linear Recurrence for Hilbert-Schmidt Norms}\label{HSN}
	
	The goal of this section is to prove the following result which may be of independent interest.
	
	\begin{theorem}\label{LinRec}
		Let $d,k \ge 1$ be fixed integers. Suppose that $X, X_1, X_2, \dots$ is a sequence of i.i.d. $\mathbb{C}^{d \times d}$-valued random variables such that
		\[ \mathbb{E} \left[ \Vert X \Vert_{HS}^{2k} \mathbbm{1}( \Vert X \Vert_{HS}^{2k} >R )  \right] \to 0 \]
		as $R \to \infty$, and define
		\[ a_n := a_n^{(2k)} := \mathbb{E} \left[ \Vert X_1 \cdots X_n \Vert_{HS}^{2k} \right]. \]
		Then the sequence $(a_n)_n$ satisfies a linear recurrence of length
		\begin{equation*}
		l^\mathbb{C} := \binom{k+d^2-1}{k}.
		\end{equation*}
		If the random variables are in fact $\mathbb{R}^{d \times d}$-valued, then the sequence $(a_n)_n$ satisfies a linear recurrence of length
		\begin{equation*}
		l^\mathbb{R} := \binom{k+\binom{d+1}{2}-1}{k}.
		\end{equation*}
		
		%Both bounds are optimal in the sense that for all $d$ and $k$ there exists $X$ with values in $\mathbb{C}^{d \times d}$ resp. $\mathbb{R}^{d \times d}$ such that $(a_n)_n$ does not satisfy a linear recurrence of length shorter than $l^\mathbb{C}$ resp. $l^\mathbb{R}$.
	\end{theorem}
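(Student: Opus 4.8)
The approach is to linearise the problem by passing to $k$-th tensor powers. The point of departure is the purely algebraic identity
\[ \Vert A \Vert_{HS}^{2k} = \big( \mathrm{Tr}(A^*A) \big)^k = \mathrm{Tr}_{d^k}\big( (A^*A)^{\otimes k} \big) = \mathrm{Tr}_{d^k}\big( (A^{\otimes k})^*\, A^{\otimes k} \big), \qquad A \in \mathbb{C}^{d\times d}, \]
where $\mathrm{Tr}_{d^k}$ denotes the trace on $(\mathbb{C}^d)^{\otimes k} \cong \mathbb{C}^{d^k}$; here we use $\mathrm{Tr}(M)^k = \mathrm{Tr}(M^{\otimes k})$ together with the compatibilities $(MN)^{\otimes k} = M^{\otimes k} N^{\otimes k}$ and $(M^{\otimes k})^* = (M^*)^{\otimes k}$. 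Writing $P_n := X_1 \cdots X_n$ (and $P_0 := I_d$) and setting
\[ Q_n := \mathbb{E}\big[ (P_n^* P_n)^{\otimes k} \big] \in \mathbb{C}^{d^k \times d^k}, \]
the identity gives $a_n = \mathrm{Tr}_{d^k}(Q_n)$. Every entry of $(P_n^*P_n)^{\otimes k}$ is a product of $k$ entries of $P_n^*P_n$ and is therefore bounded in absolute value by $\Vert P_n \Vert_{HS}^{2k} \le \prod_{i \le n} \Vert X_i \Vert_{HS}^{2k}$, using submultiplicativity of $\Vert \cdot \Vert_{HS}$; since the hypothesis on $X$ ensures $\mathbb{E}[\Vert X \Vert_{HS}^{2k}] < \infty$, independence shows that all these expectations (and all the ones below) converge.

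Next I would derive a one-step linear recursion for the $Q_n$. From $P_n = P_{n-1} X_n$ with $P_{n-1}$ independent of $X_n$ we get $P_n^* P_n = X_n^*\,(P_{n-1}^* P_{n-1})\,X_n$, hence $(P_n^*P_n)^{\otimes k} = (X_n^{\otimes k})^*\,(P_{n-1}^* P_{n-1})^{\otimes k}\,(X_n^{\otimes k})$, and conditioning on $P_{n-1}$ yields
\[ Q_n = \mathcal{T}(Q_{n-1}), \qquad \mathcal{T}\colon \mathbb{C}^{d^k\times d^k} \to \mathbb{C}^{d^k\times d^k}, \quad B \mapsto \mathbb{E}\big[ (X^{\otimes k})^*\, B\, X^{\otimes k} \big], \]
a well-defined linear operator by the same moment bound. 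Thus $Q_n = \mathcal{T}^n(Q_0)$ with $Q_0 = (I_d)^{\otimes k} = I_{d^k}$. (For $k=1$, $\mathcal{T}$ is precisely the operator $T$ appearing in Theorem \ref{diag}.)

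The crucial observation is that the $Q_n$ are confined to a small subspace. Each $(P_n^*P_n)^{\otimes k}$ is a pure $k$-th tensor power and hence lies in the symmetric subspace
\[ \mathrm{Sym}^k(\mathbb{C}^{d\times d}) := \{ t \in (\mathbb{C}^{d\times d})^{\otimes k} : \sigma\cdot t = t \text{ for all } \sigma \in \mathfrak{S}_k \}, \]
$\mathfrak{S}_k$ acting by permuting tensor factors; being a linear subspace it contains $Q_n$ for every $n$, and over a field of characteristic zero its dimension is $\binom{k+d^2-1}{k} = l^{\mathbb{C}}$. Therefore $V := \mathrm{span}\{ Q_n : n \ge 0\}$ is a $\mathcal{T}$-invariant subspace (since $\mathcal{T} Q_n = Q_{n+1}$) of dimension $L := \dim V \le l^{\mathbb{C}}$, and by the Cayley--Hamilton theorem $\mathcal{T}|_V$ annihilates a monic polynomial of degree $L$: there are constants $c_0, \dots, c_{L-1}$ with $\mathcal{T}^L Q_0 + \sum_{j=0}^{L-1} c_j \mathcal{T}^j Q_0 = 0$. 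Applying $\mathcal{T}^n$ gives $Q_{n+L} + \sum_{j=0}^{L-1} c_j Q_{n+j} = 0$ for all $n \ge 0$, and applying the linear functional $\mathrm{Tr}_{d^k}$ turns this into the asserted linear recurrence $a_{n+L} + \sum_{j=0}^{L-1} c_j a_{n+j} = 0$ of length $L \le l^{\mathbb{C}}$.

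The real case is identical once one notices that if $X$ is $\mathbb{R}^{d\times d}$-valued then $P_n^* P_n = P_n^{\mathsf T} P_n$ is a \emph{symmetric} real matrix, so $(P_n^{\mathsf T} P_n)^{\otimes k}$ — and hence $Q_n$ — lies in $\mathrm{Sym}^k(\mathcal{S}_d)$, where $\mathcal{S}_d \subset \mathbb{R}^{d\times d}$ is the $\binom{d+1}{2}$-dimensional space of symmetric matrices (and $Q_0 = (I_d)^{\otimes k}$ is there since $I_d \in \mathcal{S}_d$); since $\dim \mathrm{Sym}^k(\mathcal{S}_d) = \binom{k+\binom{d+1}{2}-1}{k} = l^{\mathbb{R}}$, the same Cayley--Hamilton argument produces a recurrence of length at most $l^{\mathbb{R}}$. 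I do not anticipate a genuine obstacle here: the whole argument rests on spotting the tensor-power linearisation and on the closure property $(X^{\otimes k})^* A^{\otimes k} X^{\otimes k} = (X^*AX)^{\otimes k}$, which keeps the recursion inside the symmetric subspace; the only routine verifications are the characteristic-zero dimension count for the symmetric subspace and the bookkeeping that Cayley--Hamilton yields constant coefficients valid for every $n$.
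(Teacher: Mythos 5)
Your proof is correct, and it takes a genuinely different and considerably more direct route than the paper's. You observe that $a_n = \mathrm{Tr}(T^n I)$ for the operator $T$ acting on the $\mathfrak{S}_k$-invariant subspace (or on $\mathrm{span}\{Q_n\}$, which is contained in it), and then one application of Cayley--Hamilton immediately yields a linear recurrence of the required length; the only analytic input is that the hypothesis on $X$ gives $\mathbb{E}[\Vert X \Vert_{HS}^{2k}] < \infty$, which you use to justify taking expectations inside the tensor-power recursion $Q_n = \mathcal{T}(Q_{n-1})$. The paper instead runs a three-stage approximation: it first proves the recurrence when $T$ is diagonalizable and the law of $X$ has finite support (by decomposing $I_{d}^{\otimes k}$ into eigenvectors of $T$ and reading off $a_n = \sum_i \alpha_i \mathrm{Tr}(v_i) \lambda_i^n$), then removes the diagonalizability assumption via a Zariski-density-and-continuity argument over tuples of finitely supported measures, then passes from finite to compact support via weak convergence, and finally from compact to general via truncation of the law of $X$. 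Your argument bypasses all four steps, handles the non-diagonalizable case for free, and needs no approximation of the law at all. What the paper's route additionally buys is the explicit formula $a_n = \sum_i \alpha_i \mathrm{Tr}(v_i)\lambda_i^n$ in the diagonalizable case, which is used later in Theorem \ref{diag}, and the identification of the recurrence coefficients with the characteristic polynomial of $T$ on $\mathrm{Sym}^k$ as recorded in Corollary \ref{LinRec2}; your proof delivers the latter as well if you apply Cayley--Hamilton to $\mathcal{T}$ restricted to $\mathrm{Sym}^k$ itself rather than to $\mathrm{span}\{Q_n\}$, which is a trivial change. For the theorem as stated, your argument is complete and cleaner.
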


	Before proving this Theorem, we first recall the following standard fact about linear recurrences.
	
	\begin{lemma}\label{GenLinRec}
		Let $(a_n)_n$ be a sequence of complex numbers, and let 
		\[p(x) = x^l + c_1 x^{l-1} + \dots + c_l = (x-\lambda_1)^{m_1} \cdots (x-\lambda_t)^{m_t}\] 
		be a polynomial such that
		\[ a_{n+l} + c_1 a_{n+l-1} + \dots + c_l a_n = 0 \]
		holds for all $n$, where $\lambda_1, \dots, \lambda_t$ are distinct complex numbers. Then there exist unique polynomials $g_i$ of degrees $< m_i$ for $i=1,\dots, t$ such that
		\[ a_n = g_1(n) \lambda_1^n + \dots + g_t(n) \lambda_t^n. \]
	\end{lemma}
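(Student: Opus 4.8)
The plan is to identify the set of \emph{all} solutions of the recurrence with an $l$-dimensional vector space and then exhibit an explicit basis of it built from the sequences $n \mapsto n^j \lambda_i^n$.

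\emph{Step 1: the solution space.} Let $V \subseteq \mathbb{C}^{\mathbb{N}}$ be the set of sequences $(a_n)_n$ satisfying $a_{n+l} + c_1 a_{n+l-1} + \dots + c_l a_n = 0$ for all $n$. This is a $\mathbb{C}$-linear subspace, and the map sending a solution to its initial segment $(a_0, \dots, a_{l-1})$ is a linear isomorphism $V \to \mathbb{C}^l$: it is injective because the recurrence determines every $a_{n+l}$ from the preceding $l$ terms, and surjective because any prescribed initial segment propagates forward uniquely. Hence $\dim_{\mathbb{C}} V = l$. Writing $S$ for the shift $(Sa)_n = a_{n+1}$, membership in $V$ is exactly the statement $p(S)a = 0$.

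\emph{Step 2: explicit solutions.} For a root $\lambda_i$ of multiplicity $m_i$ and an integer $0 \le j \le m_i - 1$, set $b^{(i,j)}_n := \binom{n}{j}\lambda_i^{\,n-j}$. A one-line computation using $\binom{n+1}{j} - \binom{n}{j} = \binom{n}{j-1}$ gives $(S - \lambda_i) b^{(i,j)} = b^{(i,j-1)}$ for $j \ge 1$ and $(S-\lambda_i)b^{(i,0)} = 0$; iterating, $(S-\lambda_i)^{m_i} b^{(i,j)} = 0$, and since $(x-\lambda_i)^{m_i}$ divides $p$ and the linear factors of $p(S)$ commute, $b^{(i,j)} \in V$. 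The family $\{ b^{(i,j)} : 1 \le i \le t,\ 0 \le j \le m_i - 1\}$ has $m_1 + \dots + m_t = l$ elements, and each $b^{(i,j)}$ is (after clearing the scalar $\lambda_i^{-j}$) a polynomial of degree $j$ in $n$ times $\lambda_i^n$. So it suffices to show these $l$ sequences are linearly independent: they then form a basis of $V$, every solution — in particular our $(a_n)_n$ — is a unique $\mathbb{C}$-combination of them, and grouping that combination by the index $i$ yields $a_n = g_1(n)\lambda_1^n + \dots + g_t(n)\lambda_t^n$ with $\deg g_i < m_i$, the uniqueness of the $g_i$ being precisely this linear independence.

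\emph{Step 3: linear independence, and the main point.} The remaining claim — the only one with genuine content — is that $\sum_{i} g_i(n)\lambda_i^n = 0$ for all $n$ with $\deg g_i < m_i$ forces every $g_i = 0$. The cleanest route is via generating functions: setting $A(x) := \sum_{n \ge 0}\big(\sum_i g_i(n)\lambda_i^n\big) x^n$ and using $\sum_{n}\binom{n+r-1}{r-1}\lambda^n x^n = (1-\lambda x)^{-r}$, the hypothesis becomes an identity of rational functions $\sum_i \sum_{r=1}^{m_i} c_{i,r}(1-\lambda_i x)^{-r} = 0$; multiplying by $\prod_i (1-\lambda_i x)^{m_i}$ and comparing orders of vanishing at $x = \lambda_i^{-1}$ forces all $c_{i,r} = 0$, hence all $g_i = 0$. (Alternatively one induces on $t$, applying $(S-\lambda_t)^{m_t}$ to annihilate the $\lambda_t$-block while leaving the others nonzero.) In fact the same generating-function computation shows directly that $A(x)$ is a rational function with denominator $p^*(x) := x^l p(1/x) = \prod_i (1 - \lambda_i x)^{m_i}$ and numerator of degree $< l$, so partial fractions alone delivers the entire lemma in one stroke, including uniqueness; I would most likely present it that way, using Steps 1–2 only as motivation.
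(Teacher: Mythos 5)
The paper does not prove this lemma; it is cited as a ``standard fact'' and the text proceeds directly to the proof of Theorem~\ref{LinRec}, so there is no argument of the paper's to compare against. Your proof is correct and is one of the standard textbook derivations: identify the solution space of the recurrence with $\mathbb{C}^l$ via initial conditions, exhibit the $l$ sequences $b^{(i,j)}_n = \binom{n}{j}\lambda_i^{\,n-j}$ as a spanning family (your normalization is chosen precisely so that $S-\lambda_i$ acts as a downshift in $j$), and settle linear independence --- hence uniqueness --- by partial fractions on the generating function, which as you observe can also carry the entire statement in one pass. One small caveat, which applies to the lemma as stated as much as to your argument: the representation $a_n = \sum_i g_i(n)\lambda_i^n$ tacitly requires $c_l \neq 0$, i.e.\ no $\lambda_i = 0$. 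If some $\lambda_i = 0$ then $g_i(n)\lambda_i^n$ vanishes identically for $n \geq 1$, while the actual solutions of $S^{m_i}a = 0$ are finitely supported sequences; correspondingly your $b^{(i,j)}_n = \binom{n}{j}\lambda_i^{\,n-j}$ is ill-defined (or degenerates to a Kronecker delta), and the step ``clearing the scalar $\lambda_i^{-j}$'' in Step~2 breaks down. The clean version either assumes $c_l \neq 0$ or asserts the identity only for $n \geq l$; if you write this up you should add one of those qualifiers.
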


	\begin{proof}[Proof of Theorem \ref{LinRec}]
		Let $\mu$ be the law of $X$. Let $S_d$ denote the space of complex-symmetric $d \times d$ matrices and let
		\[ V^\mathbb{C}:= \mathrm{Sym}^k(\mathbb{C}^{d \times d}) \quad \text{and} \quad V^\mathbb{R} := \mathrm{Sym}^k(S_d). \]
		Note that $V^\mathbb{C}$ resp. $V^\mathbb{R}$ is a complex vector space of dimension $l^\mathbb{C}$ resp. $l^\mathbb{R}$.
		
		Further, define
		\begin{align*}
		T^\mathbb{C} : V^\mathbb{C} &\to V^\mathbb{C}, \\
		v &\mapsto \mathbb{E}\big[ (X^*)^{\otimes k} v X^{\otimes k} \big]
		\end{align*}
		and let $T^\mathbb{R}$ be its restriction to $V^\mathbb{R}$ whenever $X$ is real-valued.
		
		In the following, we will shorten notation by writing $V, \, T$ and $l$ in place of the corresponding real and complex objects whenever a statement holds in both cases. We will adopt this convention for objects defined later on. Moreover, we will write $\mathbb{K}$ as a placeholder for $\mathbb{R}$ and $\mathbb{C}$.
		
		Finally, we denote by
		\[ p_T(x) = x^l + c_1 x^{l-1} + \dots + c_l \]
		the characteristic polynomial of $T$.
		
		\vspace{10pt}
		\noindent
		\textbf{Part 1: $\mu$ has finite support.} 
		
		\noindent
		Inductively applying the mixed-product identity of the Kronecker product \[(A_1 \otimes A_2)(A_3 \otimes A_4) = A_1 A_3 \otimes A_2 A_4\] for $A_1, A_2, A_3, A_4 \in \mathbb{C}^{d \times d}$ implies that
		\begin{align*} 
		a_n = \mathbb{E} \left[ \mathrm{Tr}(X_n^* \cdots X_1^* X_1 \cdots X_n )^k \right] &= \mathbb{E} \left[ \mathrm{Tr} \left( (X_n^* \cdots X_1^* X_1 \cdots X_n)^{\otimes k} \right) \right] \\
		&= \mathrm{Tr} \left( \mathbb{E} \left[ (X_n^*)^{\otimes k} \cdots (X_1^*)^{\otimes k} X_1^{\otimes k} \cdots X_n^{\otimes k} \right] \right).
		\end{align*}
		We claim that the sequence $(a_n)_n$ satisfies the recurrence defined by the characteristic polynomial of $T$, i.e. for all $n \in \mathbb{N}$ we have
		\[ a_{n+l} + c_1 a_{n+l-1} + \dots + c_l a_n = 0. \]
		
		In order to see this, assume first that $T$ is diagonalizable. Then we can write the identity $I:= I_d^{\otimes k} \in V$ as a linear combination of eigenvectors of $T$, i.e. there are $\lambda_1, \dots, \lambda_l, \, \alpha_1, \dots, \alpha_l \in \mathbb{C}$ and non-zero $v_1, \dots, v_l \in V$
		such that
		\[ T v_i = \lambda_i v_i \quad \text{ and } \quad I = \sum_{i=1}^l \alpha_i v_i. \]
		This implies that
		\begin{align*}
		a_n &= \mathrm{Tr} \left( \mathbb{E} \left[ (X_n^*)^{\otimes k} \cdots (X_1^*)^{\otimes k} \left( \sum_{i=1}^l \alpha_i v_i \right) X_1^{\otimes k} \cdots X_n^{\otimes k} \right] \right) \\
		&=\sum_{i=1}^l \alpha_i \lambda_i \mathrm{Tr} \left( \mathbb{E} \left[ (X_n^*)^{\otimes k} \cdots (X_2^*)^{\otimes k} v_i X_2^{\otimes k} \cdots X_n^{\otimes k} \right] \right).
		\end{align*}
		Inductively, we obtain
		\[a_n = \sum_{i=1}^l \alpha_i \mathrm{Tr}(v_i) \lambda_i^n, \]
		so that the sequence $(a_n)_n$ indeed satisfies the characteristic polynomial of $T$ under the assumption that this operator is diagonalizable.
		
		Now fix $m$ and weights $p_1, \dots, p_m > 0$ with $\sum p_i = 1$. Given $B_1, \dots, B_m \in \mathbb{K}^{d \times d}$, define the (finitely supported) probability measure
		\[ \mu := \sum_{i=1}^m p_i \delta_{B_i}, \]
		where $\delta_B$ denotes the Dirac measure at $B \in \mathbb{K}^{d \times d}$.
		Set $\beta = 1$ resp. $2$ when $X$ is real- resp. complex-valued. Endowing $ (\mathbb{K}^{d \times d})^m = \mathbb{R}^{\beta md^2} $ with the Zariski topology, we claim that the set \[M:=\{(B_1, \dots,B_m) \in (\mathbb{K}^{d \times d})^m : T \text{ diagonalizable} \} \subseteq \mathbb{R}^{\beta m d^2} \] is dense. \footnote{We do not claim that this is a dense condition in $\mathbb{C}^{md^2}$ in the complex case, but only in $\mathbb{R}^{2md^2}$.} First, note that this set is non-empty: Choose $B_1 = \dots = B_m = I$ all to be the identity matrix. Then $T$ is the identity on $V$, hence diagonalizable.
		
		The next step is to prove that $M$ is Zariski-open. But this follows from the fact that the map
		\begin{align*}
		\tau \, : \, \mathbb{R}^{\beta md^2} &\to \mathrm{End}(V), \\
		(B_1, \dots, B_m) &\mapsto T
		\end{align*}
		is polynomial in the entries of the $B_i$ and diagonalizability of $T$ is an open condition on the right-hand side. Hence, $M$ is indeed a dense set. Note that $\tau$ is not polynomial on $\mathbb{C}^{dm^2}$ in the complex case.
		
		Lastly, consider for fixed $n$ the composition of maps
		\[ \mathbb{R}^{\beta m d^2} \to \mathrm{End}(V) \times \mathbb{R}^{l+1} \to \mathbb{C}^l \times \mathbb{R}^{l+1} \to \mathbb{C} \]
		given by
		\begin{align*} (B_1, \dots, B_m) &\mapsto (T, (a_{n+l},\dots, a_n)), \\ (T, (b_l,\dots, b_0)) &\mapsto (p_T,(b_l,\dots, b_0)), \\ ((c_1, \dots, c_l),(b_l,\dots, b_0)) &\mapsto b_l + c_1 b_{l-1} + \dots + c_l b_0,
		\end{align*}
		where in the second map we send an operator to its characteristic polynomial viewed as a vector in its coefficients. It is clear that each of these maps is continuous, and we know that their composition
		\[ (B_1, \dots, B_m) \mapsto a_{n+l} + a_{n+l-1} c_1 + \dots + a_n c_l \]
		vanishes on the dense set $M$, hence everywhere, which settles Part 1.
		
		\vspace{10pt}
		\noindent
		\textbf{Part 2: $\mu$ has compact support.}
		
		\noindent
		Our goal is to show that the equation
		\[ a_{n+l} + c_1 a_{n+l-1} + \dots + c_l a_n = 0 \]
		still holds for all $n$. Let $K = \text{supp } \mu$ and let $(\mu_m)_m$ be a sequence of probability measures with finite support contained in $K$ such that $\mu_m \to \mu$ weakly, i.e. for all continuous bounded functions $f : \mathbb{K}^{d \times d} \to \mathbb{R}$ we have
		\[ \int_{\mathbb{K}^{d \times d}} f d \mu_m \to \int_{\mathbb{K}^{d \times d}} f d \mu \]
		as $m \to \infty$. Let $(X_{n,m})_n$ be i.i.d. sequences of random variables distributed according to $\mu_m$, let
		\[ a_{n,m} = \mathbb{E} \big[ \Vert X_{1,m} \cdots X_{n,m} \Vert_{HS}^{2k} \big], \]
		and similarly define $T_m$ and $c_{i,m}$ w.r.t. $\mu_m$. Since the measures $\mu_m$ have finite support, we know that
		\[ a_{n+l,m} + c_{1,m} a_{n+l-1,m} + \dots + c_{l,m} a_{n,m} = 0 \]
		holds for all $n,m$. It thus suffices to show that for any fixed $n$ and $i$ we have $a_{n,m} \to a_n$ and $c_{i,m} \to c_i$ as $m \to \infty$.
		
		It is a standard fact that the weak convergence of $(\mu_m)_m$ implies the weak convergence of the product measures $\mu_m^{\otimes n} \to \mu^{\otimes n}$. Moreover, we have
		\begin{align*} a_n &= \int_{(\mathbb{K}^{d \times d})^n} \Vert A_1 \cdots A_n \Vert_{HS}^{2k} d \mu^{\otimes n}(A_1, \dots, A_n), \\ a_{n,m} &= \int_{(\mathbb{K}^{d \times d})^n} \Vert A_1 \cdots A_n \Vert_{HS}^{2k} d \mu_m^{\otimes n}(A_1, \dots, A_n). \end{align*}
		Letting $f : (\mathbb{K}^{d \times d})^n \to \mathbb{R}$ be a bounded continuous function which coincides with $(A_1, \dots, A_n) \mapsto \Vert A_1 \cdots A_n \Vert_{HS}^{2k}$ on $K^{\times n}$ implies the convergence $a_{n,m} \to a_n$ as $m \to \infty$ for any fixed $n$.
		
		To show the convergence of $c_{i,m}$, note that it suffices to show that each entry of $T_m$ in some fixed basis converges to the corresponding entry of $T$. But for this, in turn, it suffices in both the real and the complex case to show the same property for the extended operator
		\begin{align*}
		\tilde T : (\mathbb{C}^{d \times d})^{\otimes k} &\to (\mathbb{C}^{d \times d})^{\otimes k} \\
		A &\mapsto \mathbb{E}\big[ (X^T)^{\otimes k} A X^{\otimes k} \big]
		\end{align*}
		and the corresponding operators $\tilde T_m$, since $T$ and $T_m$ are just restrictions of these operators to a common invariant subspace. Let us take the standard basis given by $e_{i_1 j_1} \otimes \dots \otimes e_{i_k j_k}$ with $i_1, j_1, \dots, i_k, j_k \in \{1, \dots, d\}$, where $e_{i j} \in \mathbb{C}^{d \times d}$ is the matrix with entry $i j$ being $= 1$ and the rest $ = 0$. One verifies that
		\begin{align*} (\tilde T (e_{i_1 j_1} \otimes \dots \otimes e_{i_k j_k}))_{e_{i_1' j_1'} \otimes \dots \otimes e_{i_k' j_k'} } &= \mathbb{E} [ \overline{X_{i_1 i_1'}} X_{j_1 j_1'} \cdots \overline{X_{i_k i_k'}} X_{j_k j_k'} ] \\ &= \int_{\mathbb{K}^{d \times d}} \overline{A_{i_i i_1'}} A_{j_1 j_1'} \cdots \overline{A_{i_k i_k'}} A_{j_k j_k'} d \mu(A),
		\end{align*}
		where in the complex case the integral is taken over real and imaginary part separately; analogous statements hold for $\tilde T_m$. Again taking bounded continuous functions $f : \mathbb{K}^{d \times d} \to \mathbb{R}$ which coincide with real resp. imaginary part of $A \mapsto \overline{A_{i_i i_1'}} A_{j_1 j_1'} \cdots \overline{A_{i_k i_k'}} A_{j_k j_k'}$
		on $K$ implies the claim.
		
		\vspace{10pt}
		\noindent
		\textbf{Part 3: The general case.}
		
		Let $R > 0$ be sufficiently large so that $\mu(B_R(0)) > 0$. Define the (conditional) probability measure
		\[ \mu_c^R(M) := \frac{\mu(M \cap B_R(0))}{\mu(B_R(0))}.  \]
		Denoting by $(X_n^R)_n$ a family of i.i.d. random variables corresponding to $\mu_c^R$, we can set
		\[ a_n^R := \mathbb{E} \big[ \Vert X_1^R \cdots X_n^R \Vert_{HS}^{2 k} \big] \]
		and similarly $T^R$ and $c_i^R$. Since $\mu_c^R$ has compact support, we know that
		\[ a_{n+l}^R + c_1^R a_{n+l-1}^R + \dots + c_l^R a_n^R = 0 \]
		holds for all $n$. It thus suffices to show that for any fixed $n$ and $i$ we have $a_n^R \to a_n$ and $c_i^R \to c_i$ as $R \to \infty$.
		
		We have
		\begin{align*}
		a_n^R &= \int_{(\mathbb{K}^{d \times d})^n} \Vert A_1 \cdots A_n \Vert_{HS}^{2k} \, d (\mu_c^R)^{\times n}(A_1, \dots, A_n) \\
		&= \frac{1}{\mu(B_R(0))^n} \int_{(\mathbb{K}^{d \times d})^n} \Vert A_1 \cdots A_n \Vert_{HS}^{2k} \mathbbm{1}(\Vert A_1 \Vert_{HS}^{2 k} \le R ) \cdots \mathbbm{1}(\Vert A_n \Vert_{HS}^{2 k} \le R ) \, d\mu^{\times n}(A_1,\dots,A_n)
		\end{align*}
		Since $\mu(B_R(0)) \to 1$ as $R \to \infty$, it suffices to show that this integral converges to $a_n$ as $R \to \infty$. But
		\begin{align*}
		&\quad \; \left| a_n - \int_{(\mathbb{K}^{d \times d})^n} \Vert A_1 \cdots A_n \Vert_{HS}^{2k} \mathbbm{1}(\Vert A_1 \Vert_{HS}^{2 k} \le R ) \cdots \mathbbm{1}(\Vert A_n \Vert_{HS}^{2 k} \le R ) \, d\mu^{\otimes n}(A_1,\dots,A_n) \right| \\
		&\le n \int_{(\mathbb{K}^{d \times d})^n} \Vert A_1 \cdots A_n \Vert_{HS}^{2k} \mathbbm{1}(\Vert A_1 \Vert_{HS}^{2 k} > R ) \, d\mu^{\otimes n}(A_1,\dots,A_n) \\
		&\le n \int_{\mathbb{K}^{d \times d}} \Vert A_1 \Vert_{HS}^{2 k} \mathbbm{1}(\Vert A_1 \Vert_{HS}^{2 k} > R ) \, d\mu(A_1) \int_{\mathbb{K}^{d \times d}} \Vert A_2 \Vert_{HS}^{2 k} \, d\mu(A_2) \cdots \int_{\mathbb{K}^{d \times d}} \Vert A_n \Vert_{HS}^{2 k} \, d\mu(A_n) \xrightarrow{R \to \infty} 0
		\end{align*}
		by assumption.
		
		For the convergence of $c_i^R$ to $c_i$, it again suffices to show that every entry of $T^R$ converges to the corresponding entry of $T$ in some fixed basis. Again, it suffices to show this for the extended operators $\tilde T$ and $\tilde T^R$ defined in the obvious way. But we have
		\begin{align*}
		&\left|(\tilde T - \frac{1}{\mu(B_R(0))^{2 k}}\tilde T^R)(e_{i_1 j_1} \otimes \dots \otimes e_{i_k j_k}))_{e_{i_1' j_1'} \otimes \dots \otimes e_{i_k' j_k'} } \right| \\ = &\left| \int_{\mathbb{K}^{d \times d}} \overline{A_{i_1 i_1'}} A_{j_1 j_1'} \cdots \overline{A_{i_k i_k'}} A_{j_k j_k'} \mathbbm{1}(\Vert A \Vert_{HS}^{2 k} > R) \, d\mu(A) \right|
		\le \int_{\mathbb{K}^{d \times d}} \Vert A \Vert_{HS}^{2k} \mathbbm{1}(\Vert A \Vert_{HS}^{2 k} > R) \, d\mu(A) \to 0
		\end{align*}
		as $R \to \infty$, hence the claim.
	\end{proof}

	\begin{remark}
		The idea of reducing matrix dimensions by looking at symmetric algebras in a similar context of Theorem \ref{LinRec} has been considered in \cite{BlondelN1,ParriloJ1} related to Kronecker and semidefinite lifting.
		
		One might be interested in the optimality of $l$. In the real case, we can in fact prove that $l^\mathbb{R}$ is optimal in the sense that for all $d, k \ge 1$ there exists $X$ such that the sequence $(a_n)_n$ does not satisfy a linear recurrence of any shorter length. 
		
		In this case, it in fact suffices to take $X$ to be a deterministic distribution supported in a single point $A$. Let $\lambda_1, \dots, \lambda_d$ be the eigenvalues of $A$ and assume for simplicity that they are algebraically independent. One verifies that the eigenvalues of
		\begin{align*}
		S_d &\to S_d, \\
		B &\mapsto B^T A B
		\end{align*}
		are given by $\lambda_i \lambda_j$ for $1 \le i \le j \le d$, which we will denote by $\mu_1, \dots, \mu_{d'}$ with $d' = \binom{d+1}{2}$. Moreover, it is elementary to see that the eigenvalues of $T$ are then given by $\mu_{i_1} \cdots \mu_{i_k}$ for $1 \le i_1 \le \dots \le i_k \le d'$, and that they are pairwise distinct. A generic choice of $A$ will satisfy $\alpha_i \mathrm{Tr}(v_i) \ne 0$ for all $i$, which then implies the claim.
		
		In the complex case, taking a deterministic $X$ and doing the same construction as in the real case gives an operator $T$ which can have at most
		\[ {\tilde l}^{\mathbb{C}} = \binom{k + d - 1}{k}^2 \]
		distinct eigenvalues. The above argument does prove that there are $X$ such that $(a_n)_n$ satisfies a linear recurrence of no shorter length than ${\tilde l}^{\mathbb{C}}$, but it does not give optimality of $l^\mathbb{C}$. For this, one would need to take a more complicated $X$, for which it is significantly more difficult to explicitly compute the eigenvalues of $T$. Nonetheless, numerical evidence in this case does suggest that $l^\mathbb{C}$ might still be optimal.
		
		The eigenvalue of $T$ of largest real part is essentially the generalized joint spectral radius of $X$. More details on this can be found in section \ref{HighMom}.
\end{remark}

	For further reference, we would like to record the following
	
	\begin{corollary}\label{LinRec2}
		Let $X, X_1, X_2, \dots$ be i.i.d. $\mathbb{C}^{d \times d}$-valued random variables with
		\[ \mathbb{E} \left[ \Vert X \Vert_{HS}^2 \mathbbm{1}(\Vert X \Vert_{HS}^2 > R) \right] \xrightarrow{R \to \infty} 0 \]
		and define
		\[ a_n := \mathbb{E} \left[ \Vert X_1 \cdots X_n \Vert_{HS}^2 \right]. \]
		Let
		\begin{align*} T : \mathbb{C}^{d \times d} &\to \mathbb{C}^{d \times d}, \\
		A &\mapsto \mathbb{E}\left[ X^* A X \right]
		\end{align*}
		and denote by
		\[ p_T(x) = x^l + c_1 x^{l-1} + \dots + c_l \]
		the characteristic polynomial of $T$, where $l := d^2$ is the dimension of $\mathbb{C}^{d \times d}$. Then for any $n \in \mathbb{N}$, we have
		\begin{align*}\label{LinRecEq} 
		a_{n+l} + c_1 a_{n+l-1} + \dots + c_l a_n = 0. 
		\end{align*}
	\end{corollary}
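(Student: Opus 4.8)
The plan is to observe that Corollary \ref{LinRec2} is precisely the case $k = 1$ of Theorem \ref{LinRec}, together with the fact that the proof of that theorem establishes the slightly sharper statement recorded here. For $k = 1$ the symmetric power $\mathrm{Sym}^1(W)$ of any vector space $W$ is $W$ itself, so in the notation of the proof of Theorem \ref{LinRec} one has $V^{\mathbb{C}} = \mathrm{Sym}^1(\mathbb{C}^{d \times d}) = \mathbb{C}^{d \times d}$, a space of dimension $l^{\mathbb{C}} = \binom{1 + d^2 - 1}{1} = d^2 = l$. Likewise the operator $T^{\mathbb{C}} : v \mapsto \mathbb{E}[(X^*)^{\otimes 1} v X^{\otimes 1}] = \mathbb{E}[X^* v X]$ coincides with the operator $T$ of the corollary, and the hypothesis $\mathbb{E}[\Vert X \Vert_{HS}^2 \mathbbm{1}(\Vert X \Vert_{HS}^2 > R)] \to 0$ is exactly the moment hypothesis of Theorem \ref{LinRec} at $k = 1$.

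The one point to be careful about is that the \emph{statement} of Theorem \ref{LinRec} only asserts the existence of some linear recurrence of length $l$, whereas the corollary names the recurrence. However, the \emph{proof} of Theorem \ref{LinRec} delivers exactly this: across Parts 1--3 it is shown that
\[ a_{n+l} + c_1 a_{n+l-1} + \dots + c_l a_n = 0 \]
for all $n$, where $c_1, \dots, c_l$ are the coefficients of the characteristic polynomial $p_T(x) = x^l + c_1 x^{l-1} + \dots + c_l$ of $T = T^{\mathbb{C}}$. This is verbatim the conclusion of Corollary \ref{LinRec2}, so I would simply invoke the proof of Theorem \ref{LinRec}. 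For the reader's orientation it may be worth adding that the trace identity from Part 1, read at $k = 1$, reduces to $a_n = \mathrm{Tr}(\mathbb{E}[X_n^* \cdots X_1^* X_1 \cdots X_n])$, which makes the role of $T : A \mapsto \mathbb{E}[X^* A X]$ --- peeling off the innermost pair $X_1^*, X_1$ and replacing it by $T$ applied to the remaining product --- immediate.

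There is essentially no obstacle. The substantive content --- reduction to the diagonalizable case by the Zariski-density argument of Part 1, passage to compactly supported $\mu$ by weak approximation in Part 2, and removal of the support restriction by the uniform-integrability truncation of Part 3 --- is all inherited from Theorem \ref{LinRec} without change, and in particular the conclusion holds whether or not $T$ is diagonalizable.
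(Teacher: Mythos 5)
Your proposal is correct and matches the paper's intent exactly: the corollary is recorded without separate proof precisely because the proof of Theorem \ref{LinRec}, specialized to $k=1$ (where $V^{\mathbb{C}} = \mathbb{C}^{d\times d}$ and $T^{\mathbb{C}} = T$), establishes the recurrence with the coefficients of $p_T$ across Parts 1--3. You also correctly flag and resolve the only subtlety, namely that the theorem's statement asserts less than its proof delivers.
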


	\section{Second-Moment Estimate for Random Matrix-Valued Multiplicative Functions}\label{SecMom}

	We will fix the following notation: We set
	\[ P(s,z) := \prod_p \left(1+\frac{z}{p^s}\right) \left( 1- \frac{1}{p^s} \right)^z\]
	and
	\[ F(s,z) := \frac{P(s,z)}{\Gamma(z)}, \]
	as well as $P(z):= P(1,z)$ and $F(z) := F(1,z)$.
	
	\subsection{The Diagonalizable Case}

	Using Theorem \ref{LinRec}, we are now in a position to prove Theorem \ref{diag}.
	
	\begin{proof}[Proof of Theorem \ref{diag}]
		We have
		\[ \mathbb{E} \Big[ \big \Vert \sum_{n \le x} f(n) \big \Vert_{HS}^2 \Big] = \sum_{n_1, n_2 \le x} \mathbb{E}\big[\mathrm{Tr}\big(f(n_1)^* f(n_2) \big) \big] = \sum_{n \le x} \mathrm{Tr} \big( \mathbb{E}[f(n)^* f(n)] \big). \]
		By definition of $f$, the contribution of squarefree $n$ to this sum depends only on $\omega(n)$ and is given by
		\[ a_{\omega(n)} = \mathrm{Tr} \big( \mathbb{E}[ X_{\omega(n)}^* \cdots X_1^* X_1 \cdots X_{\omega(n)} ] \big) \]
		for i.i.d. random variables $X, X_1, \dots, X_{\omega(n)}$, where $a_{\omega(n)}$ is defined as in Corollary \ref{LinRec2}. But this implies
		\[ \mathbb{E} \Big[ \big \Vert \sum_{n \le x} f(n) \big \Vert_{HS}^2 \Big] = \sum_{n \le x} \mu^2(n) a_{\omega(n)} = \sum_{i=1}^l \alpha_i \mathrm{Tr}(v_i) \sum_{n \le x} \mu^2(n) \lambda_i^{\omega(n)}. \]
		It thus remains to prove the following
		
		\begin{proposition}\label{AsympDiag}
			For any $N \in \mathbb{N}$ and $z \in \mathbb{C}$ there are explicit constants $C_1, \dots, C_N$ (depending on $z$) such that
			\[ \sum_{n \le x} \mu^2(n) z^{\omega(n)} = x \sum_{m=1}^N C_i (\log x)^{z-m} + O \left( x (\log x)^{z-N-1} \right). \]
			For example, we have
			\[\sum_{n \le x} \mu^2(n) z^{\omega(n)} = F(z) x (\log x)^{z-1} + \frac{(\gamma z - 1) P(z) + P_s(z)}{\Gamma(z-1)} x (\log x)^{z-2} + O \left( x (\log x)^{z-3} \right), \]
			where
			$P_s(z)$ denotes the derivative of $P(s,z)$ w.r.t. $s$ evaluated at $s=1$, and where $\gamma$ is the Euler-Mascheroni constant. In fact, the error terms are uniform over $|z| < A$.
		\end{proposition}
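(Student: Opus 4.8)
The plan is to read the asymptotic off the Euler product of the generating Dirichlet series by the Selberg--Delange method. For $\Re s > 1$ write
\[ D(s,z) := \sum_{n \ge 1} \frac{\mu^2(n) z^{\omega(n)}}{n^{s}} = \prod_p \Bigl( 1 + \frac{z}{p^{s}} \Bigr). \]
Since $\bigl(1 + z p^{-s}\bigr)\bigl(1 - p^{-s}\bigr)^{z} = 1 + O_z(p^{-2s})$, one may factor $D(s,z) = \zeta(s)^{z} P(s,z)$ with $P(s,z)$ exactly the Euler product in the statement, which converges absolutely and locally uniformly in $(s,z)$ for $\Re s > 1/2$; in particular $P(\cdot,z)$ is holomorphic there and $P(1,z) = P(z)$. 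Fixing the branch of $\zeta(s)^{z} = \exp(z\log\zeta(s))$ that is positive for real $s > 1$, the function $D(s,z)$ continues holomorphically to $\{\Re s > 1/2\}$ cut along $(1/2,1]$, with a branch-point singularity of type $(s-1)^{-z}$ at $s=1$.

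Next I would apply a truncated Perron formula to $D(s,z)\,x^s/s$ and deform the line of integration to a truncated Hankel-type contour of the sort used in the Selberg--Delange method: it consists of a small loop of radius $\asymp 1/\log x$ around $s=1$ together with arcs that remain inside the classical zero-free region $\zeta(s) \ne 0$ for $\Re s \ge 1 - c/\log(|t|+2)$, where the standard bounds on $\zeta$ and $1/\zeta$ give $|\log\zeta(s)| \ll \log\log(|t|+3)$ and hence make $\zeta(s)^z$ single-valued and of size $(\log(|t|+3))^{O_A(1)}$, uniformly for $|z| < A$. With the Perron truncation chosen appropriately, the contribution of everything outside the small loop is $O_A\bigl(x\exp(-c'\sqrt{\log x})\bigr)$, which is absorbed into the error term for every $N$.

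On the small loop I would write $x^s/s = x\,e^{(s-1)\log x}\,s^{-1}$, expand the holomorphic function $h(s) := \bigl((s-1)\zeta(s)\bigr)^{z} P(s,z)\,s^{-1} = \sum_{j\ge0} b_j(z)(s-1)^{j}$ into its Taylor series at $s=1$, and truncate at $j = N-1$. The coefficients $b_j(z)$ are entire in $z$, bounded for $|z| < A$, and expressible through $P(z)$, its $s$-derivatives at $1$, and the Laurent coefficients of $\zeta$ at $1$; a direct computation gives $b_0(z) = P(z)$ and $b_1(z) = (\gamma z - 1)P(z) + P_s(z)$. Integrating term by term via the Hankel representation
\[ \frac{1}{2\pi i}\int_{\mathcal H} (s-1)^{-w}\, e^{(s-1)\log x}\, ds = \frac{(\log x)^{w-1}}{\Gamma(w)}, \]
applied with $w = z - j$ for $0 \le j \le N-1$ and with $w = z - N$ for the remainder, then yields
\[ \sum_{n \le x} \mu^2(n) z^{\omega(n)} = x\sum_{m=1}^{N} \frac{b_{m-1}(z)}{\Gamma(z-m+1)}\,(\log x)^{z-m} + O_A\bigl(x(\log x)^{\Re z - N - 1}\bigr), \]
so that $C_1 = P(z)/\Gamma(z) = F(z)$ and $C_2 = \bigl((\gamma z - 1)P(z) + P_s(z)\bigr)/\Gamma(z-1)$, as asserted.

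The conceptual content is modest; the real work lies in the uniform-in-$z$ Selberg--Delange bookkeeping — controlling the branch of $\zeta(s)^z$ on the shifted contour and estimating the Perron tails with constants depending only on $A$ — which is the technical heart of that method and which genuinely requires the classical zero-free region. One legitimate option is to quote a quantitative form of the Selberg--Delange theorem (as in the moment computations of \cite{HarperNR1}), or equivalently to reduce to the classical asymptotic for $\sum_{n \le x} d_z(n)$ by writing $\mu^2(n)z^{\omega(n)} = (d_z * h_z)(n)$, where $h_z$ is supported on squarefull integers and forms the coefficient sequence of $P(s,z)$, and then to extract $b_0$ and $b_1$ from the elementary Laurent expansion above; a self-contained proof needs only the classical zero-free region and the Hankel formula for $1/\Gamma$ and is otherwise routine bookkeeping.
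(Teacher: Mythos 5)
Your proposal is correct and follows essentially the same route as the paper: both factor the Dirichlet series as $\zeta(s)^z P(s,z)$ and extract the expansion from the Taylor coefficients of $[(s-1)\zeta(s)]^z P(s,z)$ at $s=1$ divided by the appropriate Gamma values, arriving at the same $C_1$ and $C_2$. The only difference is that the paper outsources the Selberg--Delange contour analysis to a cited theorem of Dixon, whereas you sketch the Hankel-contour argument yourself (and correctly note that quoting such a theorem is a legitimate shortcut).
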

	
	This follows from \cite[Theorem, p. 188]{Dixon1} by setting
	\[ a_z(n) := \mu^2(n) z^{\omega(n)}\]
	in the notation there, so that
	\[ f(s,z) := \sum_{n \ge 1} \frac{a_z(n)}{n^s} = \sum_{n \ge 1} \frac{\mu^2(n) z^\omega(n)}{n^s}, \]
	which gives
	\[ g(s,z) := (s-1)^z f(s,z) = \left[ (s-1) \zeta(s) \right]^z \zeta(s)^{-z} f(s,z) = \left[ (s-1) \zeta(s) \right]^z P(s,z). \]
	Taylor expansion of $(s-1) \zeta(s)$ around $s=1$ and application of the Binomial Theorem quickly yields, for example,
	\[\left[ (s-1) \zeta(s) \right]^z = 1 + \gamma z (s-1) + O \left( (s-1)^2 \right).\]
	This quickly gives the second part of the assertion.
	
	Since we can compute an arbitrary number of terms in this Taylor series and also the one for $P$ around $s=1$, this gives the first claim by \cite[Theorem, p. 188]{Dixon1}.
	This concludes the proof of Theorem \ref{diag}.
	
	\end{proof}

	\begin{remark}
		The proof shows that the constants $C_{i,m}$ in Theorem \ref{diag} are explicit. Let $v_1, \dots, v_l \in V$ be the eigenvectors of $T$ associated to $\lambda_1, \dots, \lambda_l$, and let $\alpha_1, \dots, \alpha_l \in \mathbb{C}$ be such that $I = \sum \alpha_i v_i$. Then, for example, we have
		\[ C_{i,1} = \alpha_i \mathrm{Tr}(v_i) F(\lambda_i) \]
		and
		\[ C_{i,2} = \alpha_i \mathrm{Tr}(v_i) \frac{(\gamma \lambda_i - 1) P(\lambda_i) + P_s(\lambda_i)}{\Gamma(\lambda_i-1)}.  \]
		By the methods outlined in the proof of Proposition \ref{AsympDiag} one can compute arbitrarily many such constants.
		
		We also remark that if $X$ is real-valued then by Theorem \ref{LinRec} we can restrict $T$ to $S_d$ and set $l=\binom{d+1}{2}$.
	\end{remark}
	
	\begin{example}\label{SL2}
		Let $X$ be the uniform distribution on the set
		\[ S = \left\{ \pm \begin{pmatrix} 1 & 0 \\ 0 & 1 \end{pmatrix}, \pm \begin{pmatrix} 1 & 1 \\ 0 & 1 \end{pmatrix}, \pm \begin{pmatrix} 1 & -1 \\ 0 & 1 \end{pmatrix}, \pm \begin{pmatrix} 0 & 1 \\ -1 & 0 \end{pmatrix} \right\} \]
		and let $f$ be the associated matrix-valued multiplicative function. Then we have $\mathrm{Im} f = \mathrm{SL}_2(\mathbb{Z})\cup \{0\}$ almost surely. If
		\begin{align*}
		T : S_d &\to S_d, \\
		A &\mapsto \mathbb{E}[X^T A X]
		\end{align*}
		then it is verified by evaluating at $\begin{pmatrix}
		1 & 0 \\ 0 & 0
		\end{pmatrix}, \, \begin{pmatrix}
		0 & 1 \\ 1 & 0
		\end{pmatrix}$ and $\begin{pmatrix}
		0 & 0 \\ 0 & 1
		\end{pmatrix}$
		that $T$ can be represented by the matrix
		\[ T = \frac{1}{4} \begin{pmatrix}
		3 & 0 & 1 \\
		0 & 2 & 0 \\
		3 & 0 & 3
		\end{pmatrix} \]
		with eigenvalues
		\[ \lambda_1 = \frac{3+\sqrt{3}}{4}, \; \lambda_2 = \frac{3-\sqrt{3}}{4} \text{ and } \lambda_3 = \frac12 \]
		and eigenvectors
		\[ v_1 = \begin{pmatrix}
		1 & 0 \\ 0 & \sqrt{3}
		\end{pmatrix}, \; v_2 = \begin{pmatrix}
		-1 & 0 \\ 0 & \sqrt{3}
		\end{pmatrix} \text{ and } v_3 = \begin{pmatrix}
		0 & 1 \\ 1 & 0
		\end{pmatrix}. \]
		Moreover, we can write the identity matrix as $I_2 = \sum_{i=1}^3 \alpha_i v_i$ with
		\[ \alpha_1 = \frac{3+\sqrt{3}}{6}, \; \alpha_2 = \frac{-3+\sqrt{3}}{6} \text{ and } \alpha_3 = 0. \]
		We obtain
		\[ C_{1,1} = \left(1+\frac{2}{\sqrt{3}}\right) F(\lambda_1)=1.256\dots, \; C_{2,1} = \left(1-\frac{1}{\sqrt{3}}\right) F(\lambda_2) = -0.048\dots \text{ and } C_{3,1} = 0 \]
		as well as
		\[ C_{1,2} = 0.251\dots, \; C_{2,2} = -0.017 \dots \text{ and } C_{3,2} = 0. \]
		We infer
		\begin{align*} \mathbb{E} \Big[ \big \Vert \sum_{n \le x} f(n) \big \Vert_{HS}^2 \Big] &= x \left( C_{1,1}  (\log x)^{\lambda_1-1} + C_{2,1} (\log x)^{\lambda_2 - 1} + C_{1,2} (\log x)^{\lambda_1-2} + C_{2,2} (\log x)^{\lambda_2-2} \right) \\ &+ O \left( x (\log x)^{\lambda_1-3} \right).
		\end{align*}
	\end{example}
	
	\subsection{The Non-Diagonalizable Case}
	
	If $T$ is not diagonalizable, it turns out that we need to find an estimate for a more difficult quantity, and we are only able to prove an ineffective asymptotic. More precisely, we need the following
	
	\begin{proposition}\label{AsympNonDiag}
		For fixed $z \in \mathbb{C}\setminus \mathbb{Z}^-$ and $r \in \mathbb{N}_0$, we have
		\begin{equation}
		\sum_{n \le x} \mu^2(n) \omega(n)^r z^{\omega(n)} \sim z^r F(z) x (\log x)^{z-1} (\log_2 x)^r.
		\end{equation}
	\end{proposition}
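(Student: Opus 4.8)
The plan is to deduce this from Proposition \ref{AsympDiag} by differentiating in $z$. Introduce the Euler operator $\vartheta := z\frac{d}{dz}$, which satisfies $\vartheta^r z^k = k^r z^k$ for every integer $k \ge 0$. Since $z \mapsto z^{\omega(n)}$ is a polynomial, applying $\vartheta^r$ termwise to the finite sum $S(x,z) := \sum_{n \le x}\mu^2(n) z^{\omega(n)}$ yields, for each fixed $x \ge 2$,
\[ \sum_{n \le x}\mu^2(n)\,\omega(n)^r z^{\omega(n)} = \vartheta^r S(x,z), \]
both sides being entire in $z$. So it suffices to apply $\vartheta^r$ to the asymptotic expansion of $S(x,z)$.

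Using Proposition \ref{AsympDiag} with $N = 1$, write $S(x,z) = F(z)\,x(\log x)^{z-1} + E(x,z)$, where $E(x,z) = O\!\left(x(\log x)^{\mathrm{Re}(z)-2}\right)$ uniformly for $z$ in any fixed disk, and where $F$ is entire (as $1/\Gamma$ is), so that $E(x,\cdot)$ is entire as well. For the main term, $\vartheta\big[(\log x)^{z-1}\big] = z\,(\log_2 x)(\log x)^{z-1}$ while each power $(\log_2 x)^j$ is constant in $z$; iterating and keeping the summand in which $\vartheta$ falls on the power of $\log x$ at each of the $r$ steps gives
\[ \vartheta^r\!\big(F(z)\,x(\log x)^{z-1}\big) = z^r F(z)\, x(\log x)^{z-1}(\log_2 x)^r + O_{r,z}\!\big(x(\log x)^{\mathrm{Re}(z)-1}(\log_2 x)^{r-1}\big), \]
the error collecting those summands in which $F$ gets differentiated at least once; these involve only the constants $F^{(i)}(z)$ and carry at most $r-1$ factors of $\log_2 x$, hence are negligible against the main term (note $z^r F(z) \ne 0$ for $z \notin \mathbb{Z}^-$).

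The one genuinely delicate point is that the error $E(x,z)$ cannot be differentiated directly, and here I would use holomorphy together with the uniformity assertion in Proposition \ref{AsympDiag}. Fix $z_0$ and $\rho \in (0,1)$, and choose a disk $|z| < A$ containing $\{|z-z_0| = \rho\}$. Since $E(x,\cdot)$ is entire and bounded on that circle by $K\,x(\log x)^{\mathrm{Re}(z)-2} \le K\,x(\log x)^{\mathrm{Re}(z_0)+\rho-2}$ (for $x$ large, as $\mathrm{Re}(z) \le \mathrm{Re}(z_0)+\rho$ there), Cauchy's estimates give $|\partial_z^j E(x,z_0)| \le j!\,\rho^{-j}K\,x(\log x)^{\mathrm{Re}(z_0)+\rho-2}$ for $0 \le j \le r$. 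Writing $\vartheta^r = \sum_{j=0}^r \stirling{r}{j} z^j \partial_z^j$ then yields
\[ \vartheta^r E(x,z_0) = O_{r,\rho,z_0}\!\big(x(\log x)^{\mathrm{Re}(z_0)+\rho-2}\big) = o\!\big(x(\log x)^{\mathrm{Re}(z_0)-1}(\log_2 x)^r\big) \]
because $\rho < 1$. The $O_{r,z}$-term from the previous paragraph is also $o$ of this quantity, so adding the two contributions proves the asymptotic equivalence. Thus the real work is concentrated in this last step — transferring the asymptotic from $S(x,z)$ to $\vartheta^r S(x,z)$ — which is precisely why the uniformity in Proposition \ref{AsympDiag}, together with the observation that all the functions in play are entire in $z$, is essential.
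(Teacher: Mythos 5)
Your proof is correct, and it takes a genuinely different route from the paper's. The paper proves Proposition~\ref{AsympNonDiag} by working at the level of Dirichlet series: it uses the Stirling-number identity to rewrite the weight $\omega(n)^r$ in terms of falling factorials, observes that these correspond to the operator $z^r \frac{d^r}{dz^r}$ applied to $\sum_n \mu^2(n) z^{\omega(n)} n^{-s} = \zeta(s)^z F(s,z)$, computes the leading behavior $z^r \zeta(s)^z (\log \zeta(s))^r F(s,z)$ near $s=1$, and then invokes Delange's Tauberian theorem to convert this singular expansion into the claimed asymptotic for the partial sums. You instead work entirely at the level of the already-established partial-sum asymptotic (Proposition~\ref{AsympDiag}), applying the Euler operator $\vartheta = z\frac{d}{dz}$ (which satisfies $\vartheta^r z^k = k^r z^k$) termwise. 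The main term is handled by the Leibniz rule, and the error term by Cauchy's estimates on a small circle, exploiting both the holomorphy of $E(x,\cdot)$ in $z$ and the uniformity of the error in Proposition~\ref{AsympDiag}. Your version is more self-contained in that it does not need Delange's theorem or further Dirichlet-series manipulation, and it puts the uniformity clause of Proposition~\ref{AsympDiag} to genuine work (it is stated in the paper but not used directly). The trade-off is that the paper's route fits into the standard Selberg--Delange framework and would extend more readily to situations where a partial-sum asymptotic with explicit, uniform error has not already been established; your route, meanwhile, would easily upgrade to a full asymptotic expansion in powers of $\log_2 x$ and $\log x$ by taking $N > 1$ in Proposition~\ref{AsympDiag}, which the paper's statement (a single-term asymptotic) does not provide. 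Both proofs require $z \notin \mathbb{Z}^-$ precisely so that $F(z) \neq 0$ and the identified leading term is genuinely dominant.
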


	\begin{proof}
	The idea is to find an asymptotic as $s \to 1$ for the associated Dirichlet series and then to apply Delange's Theorem \cite[Th\'{e}or\`{e}me IV]{Delange1}, compare also \cite[Theorem 7.28]{Tenenbaum1}.
	
	A crucial point in proving this is that
	\[  
	\sum_{n \ge 1} \frac{\mu^2(n) \omega(n) (\omega(n)-1) \cdots (\omega(n)-r+1) z^{\omega(n)}}{n^s} = z^r \frac{d^r}{dz^r} \left( \sum_{n \ge 1} \frac{\mu^2(n) z^{\omega(n)}}{n^s}\right).
	\]
	Expanding the falling factorials using the Stirling numbers of the second kind, denoted by curly brackets, we obtain
	\[
	\sum_{n \ge 1} \frac{\mu^2(n) \omega(n)^r z^{\omega(n)}}{n^s} = \sum_{k=0}^r \stirling{r}{k} z^k \frac{d^k}{dz^k} \left( \sum_{n \ge 1} \frac{\mu^2(n) z^{\omega(n)}}{n^s} \right).
	\]
	But we have
	\[ \frac{d}{dz} \left( \sum_{n \ge 1} \frac{\mu^2(n) z^{\omega(n)}}{n^s} \right) = \frac{d}{dz} (\zeta(s)^z F(s,z)) = (\log \zeta(s)) \zeta(s)^z F(s,z) + \zeta(s)^z F_z(s,z).  \]
	Here, $F_z$, denotes the derivative of $F$ in the second component. Inductively we obtain expansions of the form
	\[ 
	\frac{d^k}{dz^k} \left( \sum_{n \ge 1} \frac{\mu^2(n) z^{\omega(n)}}{n^s} \right) = \zeta(s)^z \big[(\log \zeta(s))^k F(s,z) + \dots \big],
	\]
	where the other terms involve lower powers of $\log \zeta(s)$ as well as derivatives of $F$. We thus obtain an expansion of the form
	
	\[ 
	\sum_{n \ge 1} \frac{\mu^2(n) \omega(n)^r z^{\omega(n)}}{n^s} = z^r \zeta(s)^z \big[ (\log \zeta(s))^r F(s,z) + \dots \big],
	\]
	where again the other terms involve lower (non-negative, integral) powers of $\log \zeta(s)$ multiplied by functions of $s$ and $z$ which are holomorphic around $s=1$ for any $z$. We are thus in a position to apply Delange's Theorem, which indeed implies that
	\[
	\sum_{n \le x} \mu^2(n) \omega(n)^r z^{\omega(n)} \sim z^r F(z) x (\log x)^{z-1} (\log_2 x)^r
	\]
	when $z \in \mathbb{C} \setminus \mathbb{Z}^-$, as claimed.
\end{proof}
	
	This Proposition allows us to prove the following
	
	\begin{theorem}\label{nondiag}
		Let $d \ge 1$ be an integer, let $X$ be a $\mathbb{C}^{d \times d}$-valued random variable and let $f$ be the associated matrix-valued multiplicative function. Suppose that $\mathbb{E} X = 0$ and
		\[\mathbb{E}\left[ \Vert X \Vert_{HS}^2 \mathbbm{1}(\Vert X \Vert_{HS}^2 > R) \right] \xrightarrow{R \to \infty} 0.\] 
		Define
		\begin{align*}
		T : \mathbb{C}^{d \times d} &\to \mathbb{C}^{d \times d},\\
		A &\mapsto \mathbb{E}[ X^* A X ],
		\end{align*}
		and let $\lambda_1, \dots, \lambda_t$ be the (distinct) eigenvalues of $T$ arranged in descending order according to their real parts. Let $p_T$ be the characteristic polynomial of $T$ and let  $c_1, \dots, c_l$ and $m_1, \dots, m_t$ be such that
		\[ p_T(x) = x^l + c_1 x^{l-1} + \dots + c_l = (x-\lambda_1)^{m_1} \cdots (x-\lambda_t)^{m_t}.\]
		Further, define
		\[ a_n := \mathbb{E} \left[ \Vert X_1 \cdots X_n \Vert_{HS}^{2k} \right], \]
		where $X_1, X_2, \dots$ are i.i.d. copies of $X$, and let $g_1 ,\dots, g_t$ be the polynomials satisfying $d_i := \deg g_i < m_i$ and
		\[ a_n = g_1(n) \lambda_1^n + \dots + g_t(n) \lambda_t^n \] 
		(see Theorem \ref{LinRec} and Lemma \ref{GenLinRec}). Let $R$ be the maximal real part among those $\lambda_i$ with $d_i > 0$. Define $L_1, L_2$ and $L_3$ to be the collection of $i$ such that $\Re \lambda_i > R, \Re \lambda_i = R$ and $\Re \lambda_i < R$, respectively. Lastly, let $d_{max} = \max_{i \in L_2} d_i$ and $L_2' = \{i \in L_2 : d_i = d_{max} \}$. Then for any $N \in \mathbb{N}$ there are explicit constants $C_{i,m}$ for $i \in L_1, \, m=1, \dots, N$ and $C_j'$ for $j \in L_2'$ such that
		\begin{align*} \mathbb{E} \Big[ \big \Vert \sum_{n \le x} f(n) \big \Vert_{HS}^2 \Big] = x \sum_{m=1}^N \sum_{i \in L_1} C_{i,m} (\log x)^{\lambda_i-m} &+ (1+o(1)) \sum_{j \in L_2'} C_j' x (\log x)^{\lambda_j-1} (\log_2 x)^{d_{max}} \\ &+ O \left( x (\log x)^{\lambda_1 - N - 1} \right) .
		\end{align*}
	\end{theorem}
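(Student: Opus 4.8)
The plan is to follow the proof of Theorem \ref{diag} as far as possible, replacing the diagonalization of $T$ by the closed form for $a_n$ supplied by Lemma \ref{GenLinRec}, and then to split the resulting Dirichlet-type sum according to the three classes $L_1,L_2,L_3$. Exactly as in the proof of Theorem \ref{diag}, expanding $\big\Vert\sum_{n\le x}f(n)\big\Vert_{HS}^2$ and using $\mathbb{E}X=0$ to kill the off-diagonal terms gives
\[ \mathbb{E}\Big[\big\Vert\sum_{n\le x}f(n)\big\Vert_{HS}^2\Big]=\sum_{n\le x}\mu^2(n)\,a_{\omega(n)}, \]
with $a_m$ the sequence of Corollary \ref{LinRec2}. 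By Corollary \ref{LinRec2} together with Lemma \ref{GenLinRec} we have $a_m=\sum_{i=1}^t g_i(m)\lambda_i^m$ with $\deg g_i=d_i<m_i$; writing $g_i(m)=\sum_{r=0}^{d_i}b_{i,r}m^r$ this becomes
\[ \mathbb{E}\Big[\big\Vert\sum_{n\le x}f(n)\big\Vert_{HS}^2\Big]=\sum_{i=1}^t\sum_{r=0}^{d_i}b_{i,r}\sum_{n\le x}\mu^2(n)\,\omega(n)^r\lambda_i^{\omega(n)}. \]

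I would then evaluate each inner sum. For $r=0$, Proposition \ref{AsympDiag} yields the full asymptotic expansion $x\sum_{m=1}^N C_m(\lambda_i)(\log x)^{\lambda_i-m}+O\big(x(\log x)^{\lambda_i-N-1}\big)$ with $C_1(z)=F(z)$, etc.; for $r\ge 1$, Proposition \ref{AsympNonDiag} gives $\sum_{n\le x}\mu^2(n)\omega(n)^r\lambda_i^{\omega(n)}\sim\lambda_i^rF(\lambda_i)\,x(\log x)^{\lambda_i-1}(\log_2 x)^r$ (for $\lambda_i\notin\mathbb{Z}^-$; the case $\lambda_i\in\mathbb{Z}^-$ is handled separately, noting that there $F(\lambda_i)=0$, so no new main term appears). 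The key structural observation is that every $i\in L_1$ automatically has $d_i=0$: otherwise $\Re\lambda_i$ would be the real part of an eigenvalue with $d_i>0$ and hence $\le R$, contradicting $i\in L_1$. Consequently the $L_1$-part contributes exactly the finite sum $x\sum_{m=1}^N\sum_{i\in L_1}C_{i,m}(\log x)^{\lambda_i-m}$ with $C_{i,m}:=b_{i,0}C_m(\lambda_i)$ (so in particular $C_{i,1}=b_{i,0}F(\lambda_i)$), plus a tail $O\big(x(\log x)^{\lambda_1-N-1}\big)$.

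It remains to sort the contributions of $L_2$ and $L_3$ by order of magnitude. For $i\in L_2$ the dominant monomial is $r=d_i$, of size $x(\log x)^{R-1}(\log_2 x)^{d_i}$; among these the largest are those with $d_i=d_{max}$, i.e. $i\in L_2'$, producing $\sum_{j\in L_2'}C_j'\,x(\log x)^{\lambda_j-1}(\log_2 x)^{d_{max}}$ with $C_j':=b_{j,d_{max}}\lambda_j^{d_{max}}F(\lambda_j)$, while all remaining $L_2$-contributions (smaller $r$, or $d_i<d_{max}$, or $d_i=0$) are $o\big(x(\log x)^{R-1}(\log_2 x)^{d_{max}}\big)$. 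Every $i\in L_3$ contributes $O\big(x(\log x)^{\Re\lambda_i-1}(\log_2 x)^{d_i}\big)$, which is also $o\big(x(\log x)^{R-1}\big)$ since $\Re\lambda_i<R$ and $(\log_2 x)^{d_i}$ is negligible against any positive power of $\log x$. Collecting everything and absorbing all of the $o\big(x(\log x)^{R-1}(\log_2 x)^{d_{max}}\big)$ errors into the factor $(1+o(1))$ in front of the $L_2'$-sum gives the claimed formula.

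I expect the main difficulty to lie in this final sorting-and-combining step, for two reasons. First, Proposition \ref{AsympNonDiag} rests on Delange's theorem and delivers only an asymptotic equivalence with no quantitative remainder; unlike for the $L_1$-part (where Proposition \ref{AsympDiag} gives an expansion to any order), one therefore cannot refine the $L_2$- and $L_3$-contributions, which is precisely why the $L_2'$-term must be carried along with a soft $(1+o(1))$ rather than as part of the explicit finite sum, and why one must check carefully that every discarded contribution is genuinely of strictly smaller order than $\sum_{j\in L_2'}C_j'x(\log x)^{\lambda_j-1}(\log_2 x)^{d_{max}}$. Second, since the $\lambda_j$ with $j\in L_2'$ all share the real part $R$ but may have distinct imaginary parts, this leading sum oscillates in $x$, so some care (or a mild genericity remark) is needed to phrase the conclusion in the clean multiplicative $(1+o(1))$ form; the remaining bookkeeping — tracking the finitely many derivatives of $F$ and powers of $\log_2 x$ that arise when expanding $\frac{d^k}{dz^k}\big(\zeta(s)^zF(s,z)\big)$ as in Proposition \ref{AsympNonDiag} — is routine.
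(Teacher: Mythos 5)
Your proposal is correct and follows essentially the same route as the paper: the same reduction to $\sum_{n\le x}\mu^2(n)\,a_{\omega(n)}$, the same splitting according to $L_1$, $L_2'$, the rest of $L_2$, and $L_3$, and the same use of Propositions \ref{AsympDiag} and \ref{AsympNonDiag} for the respective pieces. The only cosmetic difference is that for $L_3$ the paper bounds $g_i(\omega(n))\lambda_i^{\omega(n)}$ by $(|\lambda_i|+\varepsilon)^{\omega(n)}$ and invokes Proposition \ref{AsympDiag}, whereas you apply Proposition \ref{AsympNonDiag} directly; both work, and your explicit remarks about the $\lambda_i\in\mathbb{Z}^-$ case and the interpretation of the $(1+o(1))$ in front of an oscillating sum are points the paper passes over silently.
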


	\begin{proof}
		Writing $b_i$ for the leading coefficient of $g_i$, the same argument as in Theorem \ref{diag} implies
		\begin{align*} 
		\mathbb{E} \Big[ \big \Vert \sum_{n \le x} f(n) \big \Vert_{HS}^2 \Big] &= \sum_{n \le x} \mu^2(n) \sum_{i=1}^t g_i(\omega(n)) \lambda_i^{\omega(n)} \\ &= \sum_{i \in L_1} b_i \sum_{n \le x} \mu^2(n) \lambda_i^{\omega(n)} + \sum_{i \in L_2'} b_i \sum_{n \le x} \mu^2(n) \omega(n)^{d_{max}} \lambda_i^{\omega(n)} \\ &+ O \left( \sum_{i \in L_2} \sum_{n \le x} \mu^2(n) \omega(n)^{d_{max}-1} \lambda_i^{\omega(n)} \right) + O_\varepsilon \left( \sum_{i \in L_3} \sum_{n \le x} \mu^2(n) (\lambda_i+\varepsilon)^{\omega(n)} \right) .
		\end{align*}
		Regarding the first summand, Proposition \ref{AsympDiag} directly tells us that for any $N \in \mathbb{N}$ there are explicit constants $C_{i,m}$ such that 
		\[ \sum_{i \in L_1} b_i \sum_{n \le x} \mu^2(n) \lambda_i^{\omega(n)} = x \sum_{m=1}^N \sum_{i \in L_1} C_{i,m} (\log x)^{\lambda_i - m} + O \left( x (\log x)^{\max \Re \lambda_i - N - 1} \right). \]
		Using Proposition \ref{AsympNonDiag} on the second summand directly implies
		\[ \sum_{j \in L_2'} b_j \sum_{n \le x} \mu^2(n) \omega(n)^{d_{max}} \lambda_j^{\omega(n)} = (1+o(1)) \sum_{j \in L_2'} C_j' x (\log x)^{\lambda_j-1} (\log_2 x)^{d_{max}} \]
		for some explicit constants $C_j'$.
		
		Proposition \ref{AsympNonDiag} furthermore implies
		\[ \sum_{i \in L_2} \sum_{n \le x} \mu^2(n) \omega(n)^{d_{max}-1} \lambda_i^{\omega(n)} = O \left( x (\log x)^{R-1} (\log_2 x)^{d_{max}-1} \right) = o \left( x (\log x)^{R-1} (\log_2 x)^{d_{max}} \right). \]
		For the last error term, fix $\varepsilon > 0$ such that $\lambda_i + \varepsilon < R$ for all $i \in L_3$. Then
		\[ \sum_{i \in L_3} \sum_{n \le x} \mu^2(n) (\lambda_i+\varepsilon)^{\omega(n)} = o \left( x (\log x)^{R-1} \right) \]
		and the claim follows.
	\end{proof}
	
	\section{An Upper Bound for Higher Even Moments}\label{HighMom}
	
		Let $s \ge 1 $, and let $X,X_1, X_2, \dots$ be i.i.d. $\mathbb{C}^{d \times d}$-valued random variables with \[\mathbb{E}[\Vert X \Vert_{HS}^s] < \infty.\] Then 
		\[\rho_s := \rho_s(X) := \lim_{n \to \infty} \mathbb{E}\left[ \Vert X_1 \cdots X_n \Vert_{HS}^s \right]^{\frac{1}{s n}} \]
		will be called the spectral $s$-radius of $X$. If $S \subset \mathbb{C}^{d \times d}$ is bounded then
		\[ \rho_\infty(S) := \lim_{k \to \infty} \sup \{ \Vert A_{i_1} \cdots A_{i_k} \Vert_{HS}^{1/k} \; : \; A_i \in S \} \]
		is called the joint spectral radius of $S$. Note that all these quantities are in fact independent of the chosen norm, since all norms on $\mathbb{C}^{d \times d}$ are equivalent.
		
		The joint spectral radius has been studied in great detail in contexts such as dynamical systems, wavelets, optimization and control. We refer the interested reader to \cite{Jungers1}. For the generalized joint spectral radius, its geometric interpretation and relation to Kronecker products, see e.g. \cite{Protasov1,Protasov2}.
		
		We note at this point that by H\"older's inequality, $\rho_s$ is monotonically increasing and if $X$ is the uniform distribution on a bounded set $S$ then we have $\rho_s \uparrow \rho_\infty$ as $s \to \infty$. Also, note that $\rho_{2 k} = \lambda_1^{1/2 k}$, where $\lambda_1 \ge 0$ is as in Theorem \ref{nondiag}.
	
	The goal of this section is to prove the following
	
	\begin{theorem}
		Let $k \ge 2$ be an integer, and let $X$ be a symmetric $\mathbb{C}^{d \times d}$-valued random variable satisfying $\mathbb{E} X = 0$ and
		\[ \mathbb{E} \left[ \Vert X \Vert_{HS}^{2k} \mathbbm{1}( \Vert X \Vert_{HS}^{2k} >R )  \right] \to 0 \]
		as $R \to \infty$. Let $f$ be the random matrix-valued multiplicative function associated to $X$.
		Then we have
		\begin{equation}\label{HigherMoment} \mathbb{E} \left[ \Big \Vert \sum_{n \le x} f(n) \Big \Vert_{HS}^{2 k} \right] \ll x^k (\log x)^{[\rho_{2 k}^2+1] \binom{2 k}{2}-2 k},  \end{equation}
		where $[ \, \cdot \, ]$ denotes the integral part.
	\end{theorem}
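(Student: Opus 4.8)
The plan is to expand the $2k$-th moment combinatorially, bound the resulting local factors by Cauchy--Schwarz, H\"older and the exponential growth rate of $a_n=\mathbb{E}[\|X_1\cdots X_n\|_{HS}^{2k}]$ (which by Theorem \ref{LinRec} and Lemma \ref{GenLinRec} is $a_n\asymp n^{O(1)}\rho_{2k}^{2kn}$, so that $a_n\ll_\varepsilon(\rho_{2k}^{2k}+\varepsilon)^n$ for every $\varepsilon>0$), and then to reduce to a multi-variable Selberg--Delange estimate of the kind in Proposition \ref{AsympDiag}. First, since $\|A\|_{HS}^{2k}=(\mathrm{Tr}(A^*A))^k$, expanding $A=\sum_{n\le x}f(n)$ yields
\[ \mathbb{E}\Big[\Big\Vert\sum_{n\le x}f(n)\Big\Vert_{HS}^{2k}\Big]=\sum_{n_1,\dots,n_{2k}\le x}\mathbb{E}\Big[\prod_{j=1}^{k}\mathrm{Tr}\big(f(n_{2j-1})^*f(n_{2j})\big)\Big]. \]
By independence of $(f(p))_p$ and the support of $f$ on squarefree integers, the inner expectation factors over primes, the factor at $p$ being a multilinear expression in $|I_p|$ copies of $X$ where $I_p=\{i:p\mid n_i\}$; by $\mathbb{E}X=0$ it vanishes when $|I_p|=1$, and by the symmetry of $X$ it vanishes whenever $|I_p|$ is odd. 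Thus only tuples in which every prime divides an even number $\ge2$ of the $n_i$ contribute, and we parametrise them by a \emph{shape}: for each even $S\subseteq\{1,\dots,2k\}$ with $|S|\ge2$ let $g_S$ be the squarefree product of the primes $p$ with $I_p=S$; then the $g_S$ are pairwise coprime, $n_i=\prod_{S\ni i}g_S$, and the summation conditions become $\prod_{S\ni i}g_S\le x$ for all $i$.

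Next I bound the contribution of a fixed shape. For each trace, Cauchy--Schwarz for the Hilbert--Schmidt inner product $\langle A,B\rangle=\mathrm{Tr}(A^*B)$ gives $|\mathrm{Tr}(f(n_{2j-1})^*f(n_{2j}))|\le\|f(n_{2j-1})\|_{HS}\|f(n_{2j})\|_{HS}$, and then H\"older's inequality with $2k$ equal exponents gives
\[ \Big|\mathbb{E}\Big[\prod_{j=1}^{k}\mathrm{Tr}(\cdots)\Big]\Big|\le\mathbb{E}\Big[\prod_{i=1}^{2k}\|f(n_i)\|_{HS}\Big]\le\prod_{i=1}^{2k}\mathbb{E}\big[\|f(n_i)\|_{HS}^{2k}\big]^{1/2k}=\prod_{i=1}^{2k}a_{\omega(n_i)}^{1/2k}. \]
Since $a_{\ell+m}\le a_\ell a_m$ (from $\|AB\|_{HS}\le\|A\|_{HS}\|B\|_{HS}$ and independence) and $\omega(n_i)=\sum_{S\ni i}\omega(g_S)$, we get $a_{\omega(n_i)}\le\prod_{S\ni i}a_{\omega(g_S)}$, hence the contribution of the shape to the sum is at most $\prod_S a_{\omega(g_S)}^{|S|/2k}$. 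Plugging in $a_n\ll_\varepsilon(\rho_{2k}^{2k}+\varepsilon)^n$ and assuming harmlessly that $\rho_{2k}>0$, this is $\ll_\varepsilon\prod_S\mu^2(g_S)\,w^{(|S|/2)\,\omega(g_S)}$, where $w:=\rho_{2k}^2+\varepsilon'$ with $\varepsilon'\to0$ as $\varepsilon\to0$.

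It then remains to estimate, uniformly over the $O_k(1)$ shapes,
\[ \sum_{\substack{(g_S)\ \mathrm{pairwise\ coprime}\\ \prod_{S\ni i}g_S\le x\ \forall\,i}}\ \prod_S\mu^2(g_S)\,w^{(|S|/2)\,\omega(g_S)}. \]
A case analysis over shapes shows the dominant one consists only of the ``edge'' groups $|S|=2$, which form a graph on $\{1,\dots,2k\}$, and among graphs the complete graph $K_{2k}$ is extremal; in that case one is left with a restricted divisor sum whose multi-variable Dirichlet series --- in variables $s_1,\dots,s_{2k}$ dual to $n_1,\dots,n_{2k}$ --- equals, up to holomorphic factors, $\prod_{1\le a<b\le 2k}\zeta(s_a+s_b)^{w}$, and the Selberg--Delange/Dixon method (cf. Proposition \ref{AsympDiag}) gives for it the order of growth $x^{k}(\log x)^{w\binom{2k}{2}-2k}$: the $x^k$ arises because the $2k$ conditions $n_i\le x$ force $\prod_S g_S^{|S|}\le x^{2k}$, i.e. roughly $k$ free variables of $x$-scale, and the exponent from $\binom{2k}{2}$ edges of rate $w$ against $2k$ constraints. (At rate $w=1$ the exponent $\binom{2k}{2}-2k$ agrees with the classical asymptotic for the $2k$-th moment of a Rademacher random multiplicative function, a sanity check.) Since $[\rho_{2k}^2+1]>\rho_{2k}^2$, choosing $\varepsilon'>0$ small enough that $w<[\rho_{2k}^2+1]$ we obtain $x^{k}(\log x)^{w\binom{2k}{2}-2k}\le x^{k}(\log x)^{[\rho_{2k}^2+1]\binom{2k}{2}-2k}$ for $x\ge e$, which is \eqref{HigherMoment}.

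The main obstacle is this last step: establishing that the complete-graph edge configuration is the extremal one among all shapes and all graphs, and carrying out the corresponding multi-variable Dixon-type estimate with the correct powers of $x$ and $\log x$ in the presence of the coprimality conditions and the coupled constraints $\prod_{S\ni i}g_S\le x$. The Cauchy--Schwarz/H\"older reduction is lossy --- it discards all cancellation beyond the parity restriction on shapes, and the passage from $w$ to the integer $[\rho_{2k}^2+1]$ is where the final bound becomes non-optimal --- but it cannot be improved without losing the clean form of the statement; indeed the argument is already sharp when $d=1$, where $\rho_{2k}=1$.
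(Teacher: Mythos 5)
Your first half---the expansion of the $2k$-th moment, the observation that only tuples with $n_1\cdots n_{2k}=\Box$ contribute, and the Cauchy--Schwarz/H\"older reduction to $\prod_i \mathbb{E}[\Vert f(n_i)\Vert_{HS}^{2k}]^{1/2k}\ll_\varepsilon \prod_i \mu^2(n_i)(\rho_{2k}^2+\varepsilon')^{\omega(n_i)/2}$---is essentially the paper's argument. The genuine gap is in the final counting step. You carry the non-integral weight $w=\rho_{2k}^2+\varepsilon'$ all the way into the multiple Dirichlet series $\prod_{a<b}\zeta(s_a+s_b)^{w}$ and assert that ``the Selberg--Delange/Dixon method'' yields $\ll x^k(\log x)^{w\binom{2k}{2}-2k}$, rounding up to $[\rho_{2k}^2+1]$ only at the very end by comparing exponents of $\log x$. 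No such estimate for multi-variable Dirichlet series with poles of \emph{non-integral} order is available---the paper explicitly records this as an open problem---and the intermediate bound you claim is in fact false when $w$ is small: the diagonal tuples $(p_1,\dots,p_k,p_1,\dots,p_k)$ alone contribute $\gg x^k(\log x)^{-k}$ for every fixed $w>0$, which exceeds $x^k(\log x)^{w\binom{2k}{2}-2k}$ as soon as $w<1/(2k-1)$. The one-variable Proposition \ref{AsympDiag} does handle arbitrary complex $z$, but it does not transfer to the coupled $2k$-variable sum with the square constraint.

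The fix is precisely the reason the integral part appears in the statement: majorize pointwise $w^{\omega(n_i)/2}\le m^{\omega(n_i)/2}$ with $m:=[\rho_{2k}^2+1]\in\mathbb{N}$ \emph{before} summing. The multiple Dirichlet series attached to $\sum_{n_1\cdots n_{2k}=\Box}\prod_i\mu^2(n_i)\,m^{(\omega(n_1)+\dots+\omega(n_{2k}))/2}$ then has an Euler product factoring as $H(s)\prod_{1\le i<j\le 2k}\zeta(s_i+s_j)^{m}$ with $H$ holomorphic to the left of $(\tfrac12,\dots,\tfrac12)$, and the theorem of de la Bret\`eche on such series (applied with the $\binom{2k}{2}$ linear forms $s_i+s_j$, each taken $m$ times, which have rank $2k$---this is where $k\ge2$ enters) gives $\ll x^k(\log x)^{m\binom{2k}{2}-2k}$ directly. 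This also dispenses with your unproven claim that the complete-graph ``shape'' is extremal, which you yourself flag as the main obstacle: the Euler-product factorization together with the cited theorem handles all configurations at once.
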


	\begin{proof}
		Denoting by $\Box$ a generic square, we have
		\begin{align*}
		\mathbb{E} \left[ \Big \Vert \sum_{n \le x} f(n) \Big \Vert_{HS}^{2 k} \right] &= \mathbb{E} \left[ \mathrm{Tr}\left( \sum_{n_1, n_2 \le x} f(n_1)^* f(n_2) \right)^k \right] \\ &= \sum_{n_1, \dots, n_{2 k} \le x} \mathrm{Tr} \left( \mathbb{E} \left[ f(n_1)^* f(n_2) \otimes \dots \otimes f(n_{2 k-1})^* f(n_{2 k}) \right] \right) \\
		&= \sum_{\substack{n_1, \dots, n_{2 k} \le x \\ n_1 \cdots n_{2 k} = \Box }} \mathrm{Tr} \left( \mathbb{E} \left[ (f(n_1) \otimes f(n_3) \otimes \dots \otimes f(n_{2 k-1}))^* (f(n_2) \otimes \dots f(n_{2 k}) \right] \right) \\
		&\le \sum_{\substack{ n_1, \dots, n_{2 k} \le x \\ n_1\cdots n_{2 k} = \Box }} \mathbb{E} \left[ \Vert f(n_1) \otimes \dots \otimes f(n_{2 k-1}) \Vert_{HS} \Vert f(n_2) \otimes \dots \otimes f(n_{2 k}) \Vert_{HS} \right] \\
		&\le \sum_{\substack{ n_1, \dots, n_{2 k} \le x \\ n_1\cdots n_{2 k} = \Box }} \mathbb{E}[\Vert f(n_1) \Vert_{HS}^{2 k}]^{1/2k} \cdots \mathbb{E}[\Vert f(n_{2 k}) \Vert_{HS}^{2 k}]^{1/2k}.
		\end{align*}
		But from the definition of $\rho_{2 k}$, we see that
		\begin{equation}\label{JSRB} \mathbb{E}[\Vert f(n) \Vert_{HS}^{2 k}]^{1/2k} \ll_\varepsilon \mu^2(n) (\rho_{2 k}+\varepsilon)^{\omega(n)} \end{equation}
		and in particular
		\[ \mathbb{E}[\Vert f(n) \Vert_{HS}^{2 k}]^{1/2k} \ll \mu^2(n) [\rho_{2 k}^2+1]^{\omega(n)/2}. \]
		We thus obtain
		\[ \mathbb{E} \left[ \Big \Vert \sum_{n \le x} f(n) \Big \Vert_{HS}^{2 k} \right] \ll \sum_{\substack{ n_1, \dots, n_{2 k} \le x \\ n_1\cdots n_{2 k} = \Box }} \mu^2(n_1) \cdots \mu^2(n_{2 k}) [\rho_{2 k}^2+1]^{(\omega(n_1) + \dots + \omega(n_{2 k}))/2}. \]
		It thus remains to prove that
		\begin{equation}\label{MultDSBound} \sum_{\substack{ n_1, \dots, n_{2 k} \le x \\ n_1\cdots n_{2 k} = \Box }} \mu^2(n_1) \cdots \mu^2(n_{2 k}) m^{(\omega(n_1) + \dots + \omega(n_{2 k}))/2} \ll x^k (\log x)^{m \binom{2k}{2}-2 k} \end{equation}
		for all $m \in \mathbb{N}$. We proceed similar to the proof of \cite[Theorem 4]{HarperNR1}. To this end, let $g$ be the multiplicative function supported on squarefree integers such that $g(n_1,\dots,n_{2 k}) = m^{(\omega(n_1) + \dots + \omega(n_{2 k}))/2}$ when $n_1 \cdots n_{2 k}$ is a square, and $0$ otherwise. Then the associated multiple Dirichlet series
		\[ G(s) := \sum_{d_1, \dots, d_{2 k} \ge 1} \frac{g(d_1, \dots, d_{2 k})}{d_1^{s_1} \cdots d_{2 k}^{s_{2 k}}} \]
		has the Euler product representation
		\[ G(s) = \prod_p \sum_{\substack{0 \le \alpha_1, \dots, \alpha_{2 k} \le 1 \\ \alpha_1 + \dots + \alpha_{2 k} \;\equiv \;0 \; (2)}} \frac{m^{(\alpha_1 + \dots + \alpha_{2 k})/2}}{p^{\alpha_1 s_1 + \dots + \alpha_{2 k} s_{2 k}}}. \]
		This factors as
		\[ H(s_1,\dots,s_{2 k}) \prod_{1 \le i < j \le 2k} \zeta(s_i + s_j)^m \]
		with $H$ being holomorphic strictly to the left of $s= (\frac12, \dots, \frac12)$. The claim follows from \cite[Theorem 2]{DeLaBreteche1}, choosing each of the linear forms $s_i+s_j$ for $1 \le i < j \le 2 k$  precisely $m$ times, so that they are $m \binom{2k}{2}$ and have rank $2 k$ (this is where we are using that $k \ge 2$).
	\end{proof}

	\begin{remark}
		Note that our argument in fact implies a stronger statement than (\ref{MultDSBound}), namely that for fixed $m \in \mathbb{N}$ we have
		\[
		\sum_{\substack{ n_1, \dots, n_{2 k} \le x \\ n_1\cdots n_{2 k} = \Box }} \mu^2(n_1) \cdots \mu^2(n_{2 k}) m^{(\omega(n_1) + \dots + \omega(n_{2 k}))/2} \sim C_k x^k (\log x)^{m \binom{2k}{2}-2 k}.
		\]
		It seems rather natural, also in light of Proposition \ref{AsympDiag}, to conjecture that this asymptotic holds for all fixed $z > 0$ (say) in place of $m \in \mathbb{N}$. However, this is not possible when $z$ is small: Looking only at the contribution of tuples $(n_1, \dots, n_{2k}) = (p_1, \dots, p_k, p_1, \dots, p_k)$, we see that for any fixed $z > 0$ we have
		\[
		\sum_{\substack{ n_1, \dots, n_{2 k} \le x \\ n_1\cdots n_{2 k} = \Box }} \mu^2(n_1) \cdots \mu^2(n_{2 k}) z^{(\omega(n_1) + \dots + \omega(n_{2 k}))/2} \gg \frac{x^k}{(\log x)^k}.
		\]
		When $z$ is sufficiently small (depending only on $k$) then this is clearly a contradiction. It would be very interesting to know what the correct asymptotic for this expression is, or more generally for any multiple Dirichlet series of this type, i.e. to have a generalisation of \cite[Theorem 2]{DeLaBreteche1} to poles of non-integral order. Our remark here suggests that this is not as straightforward as one might expect.
		
		Note also that Theorem \ref{LinRec} implies that we can improve (\ref{JSRB}) to
		\[ \tag{$*$} \mathbb{E}[\Vert f(n) \Vert_{HS}^{2 k}]^{1/2k} \ll \mu^2(n) \omega(n)^{r/2k} \rho_{2 k}^{\omega(n)}, \]
		where $0 \le r < l^\mathbb{C}$ is the degree of $g_1$. This leads in a natural way to the even more general question of obtaining an asymptotic (or upper bound) for multiple Dirichlet series with a pole of non-integral order times a logarithmic pole.
		
		In particular, ($*$) implies that if $T$ is diagonalizable (or more generally if $\deg g_1 = 0$) then we get
		\[ \mathbb{E}[\Vert f(n) \Vert_{HS}^{2 k}]^{1/2k} \ll \mu^2(n)\rho_{2 k}^{\omega(n)}.\]
		If in addition $\rho_{2 k}^2$ is an integer, our argument thus gives
		\[\mathbb{E} \left[ \Big \Vert \sum_{n \le x} f(n) \Big \Vert_{HS}^{2 k} \right] \ll x^k (\log x)^{\rho_{2 k}^2 \binom{2 k}{2}-2 k} \]
		in place of (\ref{HigherMoment}). In particular, if $f$ is a Rademacher multiplicative function then $\rho_{2 k} = 1$ for all $k$ and up to constant we obtain the optimal upper bound. Noting that all our inequalities in the proof are in fact equalities in this case and that (\ref{MultDSBound}) can be improved to an asymptotic, we can recover \cite[Theorem 4]{HarperNR1}, but this leads to the identical argument as it is carried out there.
		
		It would be interesting to know if one can obtain a lower bound for the higher moments, for example in terms of the joint spectral subradius.
	\end{remark}

	\begin{example}
		We continue with example \ref{SL2}. We were not able to find explicit expressions for $\rho_{2 k}$ when $k \ge 2$; it seems quite plausible that such expressions don't exist. However, we can bound $\rho_{2 k}$ from above by the joint spectral radius $\rho_\infty(S)$. Moreover, using the JSR toolbox for Matlab (see \cite{VHJ} for its documentation and instructions for installation), we could compute that $\rho_\infty^2 = 1.8173540 \dots < 2$. In particular, we see that
		\[ \mathbb{E} \left[ \Big \Vert \sum_{n \le x} f(n) \Big \Vert_{HS}^{2 k} \right] \ll x^k (\log x)^{4k(k-1)} \]
		holds for $k \ge 2$ and $x \ge 2$.
	\end{example}
	
	\bibliography{Eigenvalues}
	
\end{document}